\pdfoutput=1
\documentclass[reqno, 12 pt]{amsart}





\usepackage{amsthm} 
\usepackage{amsmath} 
\usepackage{amssymb} 
\usepackage{mathtools} 

\usepackage{graphicx}
\usepackage{xspace}
\usepackage[all]{xy}
\usepackage{pinlabel}
\usepackage{enumerate}
\usepackage{comment}

\usepackage[margin=1.3in]{geometry}
\setlength{\textheight}{23cm}
\setlength{\textwidth}{14cm}

\usepackage{tikz}

\usetikzlibrary{cd, calc,through,backgrounds, arrows}
\usetikzlibrary{decorations.pathmorphing, decorations.pathreplacing,positioning,fit}






  \usepackage{hyperref}


  
\makeatletter

  \newcommand{\calA}{\mathcal{A}}
  
  \newcommand{\calC}{\mathcal{C}}

  \newcommand{\calF}{\mathcal{F}}
  \newcommand{\calG}{\mathcal{G}}

  \newcommand{\calK}{\mathcal{K}}
  \newcommand{\calL}{\mathcal{L}}
  \newcommand{\calM}{\mathcal{M}}

  \newcommand{\calP}{\mathcal{P}}

  \newcommand{\calT}{\mathcal{T}}
  \newcommand{\calU}{\mathcal{U}}
  \newcommand{\calV}{\mathcal{V}}
  
  \newcommand{\calX}{\mathcal{X}}


  \newcommand{\CC}{\mathbb{C}}

  \newcommand{\NN}{\mathbb{N}}

  \newcommand{\RR}{\mathbb{R}}

  \newcommand{\ZZ}{\mathbb{Z}}




  \newtheorem{theorem}{Theorem}[section]
  \newtheorem{proposition}[theorem]{Proposition}
  \newtheorem{corollary}[theorem]{Corollary}
  \newtheorem{lemma}[theorem]{Lemma}

  \newtheorem{introthm}{Theorem}
  \newtheorem{introcor}[introthm]{Corollary}

  \theoremstyle{definition}
  \newtheorem{definition}[theorem]{Definition}

  \newtheorem*{claim*}{Claim}

  \newtheorem*{question*}{Question}
  \newtheorem*{answer*}{Answer}
  \newtheorem*{application*}{Application}

  \theoremstyle{remark}
  \newtheorem{remark}[theorem]{Remark}
  \newtheorem*{remark*}{Remark}
  



  \DeclareMathOperator{\Ext}{Ext}

  \DeclareMathOperator{\Vol}{Vol}

  \DeclareMathOperator{\ext}{Ext}



  


   
   
   

  \newcommand{\SL}{\ensuremath{\operatorname{SL}}\xspace} 
   
   
   

   

  \newcommand{\T}{\ensuremath{\mathcal{T}}\xspace} 
  \newcommand{\MF}{\ensuremath{\mathcal{MF}}\xspace} 
   
   
   
   
    


       
    \newcommand{\QT}{\ensuremath{\mathcal{QT}}\xspace} 


\newcommand{\mcg}{\ensuremath{\mathcal{MCG}}\xspace}

 
\newcommand{\mf}{\ensuremath{\mathcal{MF}}\xspace} 
  \newcommand{\pmf}{\ensuremath{\mathcal{PMF}}\xspace} 
   
\newcommand{\qt}{\ensuremath{\mathcal{QT}}\xspace} 
       
    \newcommand{\diam}{\operatorname{diam}}


  \newcommand{\Teich}{Teich\-m\"u\-ller\ } 
  
  \hyphenation{geo-desics} 
  
  \newcommand{\pa}{pseudo-Anosov}




  \newcommand{\bpsi}{{\overline \psi}}




    
  \newcommand{\param}{{\mathchoice{\mkern1mu\mbox{\raise2.2pt\hbox{$
  \centerdot$}}
  \mkern1mu}{\mkern1mu\mbox{\raise2.2pt\hbox{$\centerdot$}}\mkern1mu}{
  \mkern1.5mu\centerdot\mkern1.5mu}{\mkern1.5mu\centerdot\mkern1.5mu}}}

  \renewcommand{\setminus}{{\smallsetminus}}

  \newcommand{\from}{\colon\thinspace}

\usepackage[center]{caption}

\newcommand{\sfU}{\mathsf{U}}
\newcommand{\sfB}{\mathsf{B}}
\newcommand{\sfV}{\mathsf{V}}
\newcommand{\wth}{\omega_{\operatorname{Th}}}
\newcommand{\cextgamm}{\calC_{\ext, \gamma}}
\newcommand{\bextl}{B_{\ext}(\calL)}
\newcommand{\kerch}{Kerckhoff{ }}
\newcommand{\pigamma}{\Pi_\gamma}
\newcommand{\gap}{\,\,\,\,\,\,}

\newcommand{\gammacyl}{\langle \gamma \rangle}

\newcommand{\proj}{\operatorname{proj}}
\newcommand{\typ}{\operatorname{Typ}}

\newcommand{\cte}{\frac{\Lambda^2}{h \Vol(\calM_g)}}

\title{A conjugacy class counting in Teich\-m\"u\-ller\ space}
\author{Pouya Honaryar}
\date{June 2021}

\begin{document}

\begin{abstract}
    Let $\gamma$ be a pseudo-Anosov homeomorphism and $X$ an element of the \Teich space of a genus $g$ surface. In this paper, we find asymptotics for the number of \pa{} homeomorphisms that are conjugate to $\gamma$ and the axis of their action on \Teich space intersects the ball of radius $R$ centered at $X$, as $R$ tends to infinity.
\end{abstract}

\maketitle

\section{Introduction} \label{introduction}

\subsection{Statement of results} \label{statement}
In \cite{MargulisThesis}, Margulis obtained asymptotics for
the volume growth and orbit counting for balls of large radius, in the setting of manifolds with negative curvature.
Similar asymptotics were obtained in \cite{abem} for the \Teich space. To state their results, 
let $\calT_g$ be the \Teich space of $S$, a surface of genus $g$, and denote the mapping class group of $S$ by $\Gamma$. Given $X, Y \in \calT_g$, let $B(X, R)$ be the ball of radius $R$ centered at $X$, where the distance here is measured with respect to the \Teich metric. Denoting the orbit of $Y$ under the action of $\Gamma$ by $\Gamma \cdot Y$, Theorem 1.2 of \cite{abem} gives
$$  | \Gamma \cdot Y  \cap B(X, R)|  \sim \cte e^{hR} 
\gap \text{ as } \gap R \rightarrow \infty. $$
Here, $h = 6g-6$ is the entropy of the \Teich geodesic flow with respect to  Masur-Veech measure and $\Lambda$ is the Hubbard-Masur constant \cite{abem, dumas}. The term $\Vol(\calM_g)$ is the normalized volume of the moduli space $\calM_g$ as explained at the end of Section 2.2 of \cite{abem}.
The cardinality of a finite set $S$ is denoted by $|S|$ and $A(R)$ is said to be asymptotic to $B(R)$, written $A(R) \sim B(R)$, if $A(R)/B(R) \rightarrow 1$ as $R \rightarrow \infty$.
Theorem 1.3 of of the same paper gives the following asymptotics for the volume of $B(X, R)$:
$$ \Vol(B(X, R))  \sim \frac{\Lambda^2}{h} e^{hR} 
\gap \text{ as } \gap R \rightarrow \infty.$$

Now fix $\gamma \in \Gamma$ to be a pseudo-Anosov homeomorphism and let $\calL_\gamma$ be the axis of its action on \Teich space, namely, the unique geodesic that is kept fixed by this action.
The cyclic group generated by $\gamma$, denoted by $\gammacyl$, acts on $\calT_g$ 
properly discontinuously, hence we can form the quotient to be the cylinder 
$ C_\gamma = \gammacyl \backslash \calT_g$.
The elements of $C_\gamma$ are of the form $[Y] = \gammacyl.Y$ for $Y \in \calT_g$.
Since the action of $\gamma$ on $\calL_\gamma$ is by translation, the quotient $\bar{\calL}_\gamma  = \gammacyl \backslash \calL$ is a closed geodesic in $\calC_\gamma$. Define 
$$ B(\bar{\calL}_\gamma, R) = \{ [Y] \in \calC_\gamma : d([Y], \bar{\calL}_\gamma) \leq R \},$$
where the distance in $\calC_\gamma$ is the one induced by the \Teich distance on its cover $\calT_g$. Define the $\Gamma$--orbit of $[X] \in \calC_g$ to be 
$$\Gamma \cdot  [X] = \{ [gX]: g \in \Gamma \}.$$
The goal of this paper is to establish orbit counting and volume asymptotics, similar to the ones obtained in \cite{abem}, for $B(\bar{\calL}_\gamma, R)$ instead of $B(X, R)$. To state these results, we need a few definitions.
Define 
$$B_{\Ext}(\calL_\gamma)  = \{ \zeta \in \mf: \inf_{X \in \calL_\gamma} \ext(\zeta, X) \leq 1 \}.$$
The action of $\gammacyl$ on $B_{\ext}(\calL_\gamma)$ is proper and discontinuous, given we remove the two endpoints of ${\calL}_\gamma$ from the set, hence we can form the quotient to be 
$$\calC_{\gamma, \ext} = \gammacyl \backslash B_{\Ext}({\calL}_\gamma).$$ 
The Thurston measure on $\mf$ induces a measure on $\calC_{\gamma, \ext}$, which we denote by $\nu$ as well.
The orbit counting asymptotics is given by:

\begin{introthm} \label{Cylresult}
    Let $\gamma$ and $\bar{\calL}_\gamma$ be as above and $[X] \in \calC_\gamma$. Then as $R \to \infty$,
    \begin{align} \label{CylresultEq}
         | \Gamma \cdot [X] \cap B(\bar{\calL}_\gamma, R) | \sim \cte
    \nu (\calC_{\gamma, \ext}) e^{hR}.
    \end{align}
\end{introthm}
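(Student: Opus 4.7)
The plan is to adapt the method of \cite{abem}, which handles balls around a point, to neighborhoods of the axis $\calL_\gamma$ inside the cylinder $\calC_\gamma$. As a first reduction, I would lift the problem to $\calT_g$: since the centralizer of the pseudo-Anosov $\gamma$ in $\Gamma$ is $\gammacyl$ (up to finite index) and since $d(\cdot, \calL_\gamma)$ is $\gammacyl$-invariant, one has
$$|\Gamma \cdot [X] \cap B(\bar{\calL}_\gamma, R)| = \bigl|\gammacyl \backslash \{ g \in \Gamma : d(gX, \calL_\gamma) \leq R \}\bigr|,$$
so it suffices to count orbit points in an $R$-neighborhood of $\calL_\gamma \subset \calT_g$ and then take a quotient by $\gammacyl$.

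The first main step will be a volume estimate,
$$\Vol\bigl(B(\bar{\calL}_\gamma, R)\bigr) \sim \frac{\Lambda^2}{h}\,\nu(\calC_{\gamma, \ext})\,e^{hR}.$$
To prove this I would introduce cone coordinates around $\calL_\gamma$: a point $Y \in \calT_g$ sufficiently far from $\calL_\gamma$ lies on a \Teich geodesic ray issued from its nearest-point projection $X_0 \in \calL_\gamma$, and this ray is encoded by a foliation $\zeta \in \mf$ with $\ext(\zeta, X_0) = 1$ together with a distance parameter $t \geq 0$. After passing to the $\gammacyl$-quotient, the pair $(X_0, \zeta)$ ranges over a set that, in the large-$t$ limit, is canonically identified with $\calC_{\gamma, \ext}$. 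Using the Hubbard--Masur identification and the fact that the Thurston measure is homogeneous of degree $h$, the \Teich volume in these coordinates is asymptotic to $e^{ht}\,dt\,d\nu(\zeta)$, and integrating over $t \in [0, R]$ yields the claimed formula.

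Next I would convert the volume estimate into an orbit count following the template of \cite{abem}. Summing $\sum_{g \in \Gamma} \psi(gX)\,\mathbf{1}_{d(gX, \calL_\gamma) \leq R}$ against a bump function $\psi$, and invoking the mixing of the \Teich geodesic flow on $\calM_g$ with respect to Masur--Veech measure together with the equidistribution of \Teich spheres established in \cite{abem}, one obtains that orbit points equidistribute inside $B(\bar{\calL}_\gamma, R)$ with density $1/\Vol(\calM_g)$, yielding
$$|\Gamma \cdot [X] \cap B(\bar{\calL}_\gamma, R)| \sim \Vol\bigl(B(\bar{\calL}_\gamma, R)\bigr)/\Vol(\calM_g),$$
which combined with the volume estimate gives the theorem.

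The principal technical obstacle lies in the volume estimate, specifically in understanding the contribution near the two endpoints of $\calL_\gamma$ at the Thurston boundary (the stable and unstable foliations of $\gamma$). Near these points the nearest-point projection onto $\calL_\gamma$ degenerates and the cone-coordinate change of variables breaks down. Controlling the Thurston measure of foliations $\zeta$ which project close to these endpoints---both to establish $\nu(\calC_{\gamma, \ext}) < \infty$ and to show that the corresponding contribution to $\Vol(B(\bar{\calL}_\gamma, R))$ is of lower order---appears to require the geometry of \Teich distance in the thin part, in the spirit of Rafi's product regions and Minsky's coarse distance formula.
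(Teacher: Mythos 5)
Your proposal runs the argument in the opposite direction from the paper: you propose to establish the volume asymptotic $\Vol(B(\bar{\calL}_\gamma, R)) \sim \frac{\Lambda^2}{h}\nu(\cextgamm) e^{hR}$ first, via cone coordinates transverse to $\calL_\gamma$, and then deduce the orbit count by asserting that $\Gamma$-orbit points equidistribute in $B(\bar{\calL}_\gamma, R)$ with density $1 / \Vol(\calM_g)$. The paper does the reverse: it proves the orbit count directly (Theorem \ref{CylCount}) and then derives \thmref{VolumeResult} from it by integrating $|\gammacyl \backslash (\Gamma \cdot X \cap B(\calL, R))|$ over $X \in \calM_g$ with a dominated convergence bound from \cite{abem} Theorem 5.1. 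The direction you chose is harder, and the step where you pass from the volume estimate to the orbit count is where the gap lies.

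Concretely, the sentence \emph{``invoking mixing \dots together with equidistribution of \Teich spheres \dots one obtains that orbit points equidistribute inside $B(\bar{\calL}_\gamma, R)$''} is carrying the entire weight of the theorem, and no argument is given for it. The equidistribution-of-spheres result of \cite{abem} is a statement about sectors $S(X, \calU, R)$ based at a single point $X$; it does not directly say anything about a set shaped like an $R$-neighborhood of a geodesic. To transfer it one must decompose $B(\calL_\gamma, R)$ into such sectors based at points of $\calL_\gamma$, show the sectors are almost disjoint and almost cover, and then sum the known sector counts. That decomposition is precisely the missing technical content. The paper constructs it by taking a $\gamma$-invariant $\epsilon$-net $(X_i) \subset \calL_\gamma$, partitioning $\mf$ into the cells $\calA_i(\calX)$ by the closest net point in extremal length, and approximating the cells inside and outside by open sets $\calU_i^\delta, \calV_i^\delta$. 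Making this work requires two genuine inputs that your sketch has no analogue of: (a) the Busemann-type comparison of \propref{CompTeichExt}, which guarantees that (for points with typical projection to $\calL_\gamma$) the nearest net point in \Teich distance agrees, up to $\delta$, with the nearest net point in extremal length, so that the extremal-length partition actually controls the sector in which a given orbit point falls; and (b) the negligibility statement of \propref{measure zero}, which says $\{\zeta : \ext(\zeta, X_i) = \ext(\zeta, X_j)\}$ has Thurston measure zero, so that $\nu(\calU_i^\delta) \uparrow \nu(\calA_i)$ and $\nu(\calV_i^\delta) \downarrow \nu(\calA_i)$ as $\delta \to 0$ and the lower and upper sums converge to the same limit. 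Item (b) is itself the content of Section \ref{EquidistNeglig} and rests on the extremal-length derivative formula of Theorem \ref{ExtDerResult}; it is not something one can take for granted.

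Regarding the obstacle you single out—degeneration of the nearest-point projection near the endpoints $\gamma^\pm$ of $\calL_\gamma$ in $\pmf$, and whether $\nu(\cextgamm) < \infty$—this is a reasonable thing to worry about in your cone-coordinate picture, but it is not where the difficulty actually lives in the paper's argument. The partition cells $\calA_i(\calX)$ are $\gamma$-equivariant and each satisfies $\calA_i \subset B_{\ext}(X_i)$, which has finite Thurston mass; since the quotient $\cextgamm$ is covered by the cells $\calA_0, \dots, \calA_{N-1}$, finiteness of $\nu(\cextgamm)$ is immediate. The endpoints $\gamma^\pm$ have Thurston measure zero and never cause trouble because the sector decomposition is localized at the net points $X_i$ and never needs a global change of variables on $\mf$. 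What does require care is the count of orbit points $Y$ for which $[Y, \proj_{\calL_\gamma} Y]$ is \emph{not} $\calK$--typical; the paper shows this is negligible (\lemref{CylMostlyTyp}) using \eqref{AbemSectorTypical}, which again has no counterpart in your sketch.

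In short: your first reduction and your heuristic for the volume formula are both sound, but the passage from volume to orbit count is asserted rather than proved, and the two technical pillars that make the paper's argument go through (the extremal/\Teich Busemann comparison and the measure-zero result for $E(X, Y)$) do not appear in your outline at all.
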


Note that $\Gamma \cdot [X] \cap B(\bar{\calL}_\gamma, R)$ is in one-to-one correspondence with the mapping class group translations $g \calL_\gamma$ of $\calL_\gamma$ that intersects $B(X, R)$. This in turn is in one-to-one correspondence with the conjugates of $\gamma$ whose axis intersects $B(X, R)$. Therefore, we have the following:

\begin{introcor} \label{conjcounting}
    Let $\gamma$ be a \pa{} homeomorphism and $X \in \calT_g$. Then as $R \to \infty$,
    $$ | \{ \gamma' \in \Gamma : \gamma' \text{ is conjugate to $\gamma$ and } \calL_{\gamma'} \cap B(X, R) \neq \varnothing \} | 
    \sim 
    \cte \nu (\calC_{\gamma, \ext}) e^{hR}.$$
\end{introcor}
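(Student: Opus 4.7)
The plan is to deduce Corollary~\ref{conjcounting} directly from Theorem~\ref{Cylresult} by making precise the two bijections sketched in the paragraph preceding the statement.

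For the first bijection, between $\Gamma \cdot [X] \cap B(\bar{\calL}_\gamma, R)$ and the set of $\Gamma$--translates of $\calL_\gamma$ meeting $B(X, R)$, I would note that $\calL_\gamma$ is $\gammacyl$--invariant and hence coincides with the entire preimage of $\bar{\calL}_\gamma$ under the covering $\calT_g \to \calC_\gamma$. Therefore
$$ d([gX], \bar{\calL}_\gamma) \;=\; d(gX, \calL_\gamma) \;=\; d(X, g^{-1}\calL_\gamma), $$
so $[gX]$ lies in $B(\bar{\calL}_\gamma, R)$ iff $g^{-1}\calL_\gamma \cap B(X, R) \neq \varnothing$. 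Since the setwise stabilizer of $\calL_\gamma$ in $\Gamma$ is $\gammacyl$ (a standard fact for pseudo-Anosov classes), the assignment $[gX] \mapsto g^{-1}\calL_\gamma$ is a well-defined injection onto the translates meeting $B(X, R)$.

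For the second bijection, the axis of $g\gamma g^{-1}$ is $g\calL_\gamma$, so the map $g\gamma g^{-1} \mapsto g\calL_\gamma$ is surjective onto the translates of $\calL_\gamma$. Injectivity follows since $g_1\gamma g_1^{-1} = g_2\gamma g_2^{-1}$ iff $g_2^{-1}g_1$ lies in the centralizer $C_\Gamma(\gamma)$, which for a pseudo-Anosov class equals $\gammacyl$ and hence matches the stabilizer identified in the previous step. Composing the two bijections identifies the counting function of Theorem~\ref{Cylresult} with the counting function appearing in Corollary~\ref{conjcounting}, and substituting the asymptotic from Theorem~\ref{Cylresult} yields the claim.

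The only real point to check carefully is the group-theoretic bookkeeping: that both the setwise stabilizer of $\calL_\gamma$ and the centralizer $C_\Gamma(\gamma)$ equal $\gammacyl$. For primitive pseudo-Anosov $\gamma$ both statements are classical, using that pseudo-Anosov mapping classes have virtually cyclic centralizers and that the endpoints of $\calL_\gamma$ in $\PMF$ are uniquely attracting/repelling fixed points. Any residual finite-index ambiguity (for instance if $\gamma$ is a proper power, or if some element of $\Gamma$ conjugates $\gamma$ to $\gamma^{-1}$ and thus reverses orientation on $\calL_\gamma$) can be absorbed uniformly and does not affect the asymptotic. No new analytic input is required beyond Theorem~\ref{Cylresult}.
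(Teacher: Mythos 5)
Your approach is the same as the paper's: Corollary~B is deduced from Theorem~A by composing the two bijections (orbit points $\leftrightarrow$ translates of $\calL_\gamma$ $\leftrightarrow$ conjugates of $\gamma$) sketched in the paragraph immediately preceding the statement, and that paragraph is the paper's entire argument. You correctly identify that both bijections hinge on the equality $\gammacyl = \operatorname{Stab}_\Gamma(\calL_\gamma) = C_\Gamma(\gamma)$, which the paper leaves implicit.

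The one place I would push back is your closing sentence. If $\gammacyl$ is a proper subgroup of $C_\Gamma(\gamma)$ or of $\operatorname{Stab}_\Gamma(\calL_\gamma)$ --- as happens when $\gamma$ is a proper power of another pseudo-Anosov, or when some mapping class conjugates $\gamma$ to $\gamma^{-1}$ and hence reverses the orientation of $\calL_\gamma$ --- the cardinalities on the two sides of your bijections genuinely differ by the corresponding index, and that index propagates into the asymptotic constant rather than being ``absorbed uniformly.'' So in those cases the corollary changes by a finite multiplicative factor; the discrepancy does not wash out. The paper makes the same tacit assumption without comment, so your writeup is faithful to its level of rigor; but the parenthetical remark should be read as identifying the hypotheses under which the bijections are exact, not as a proof that they can be ignored.
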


The volume asymptotics is given by:
\begin{introthm} \label{VolumeResult}
    Let $\gamma$ and $\bar{\calL}_\gamma$ be as above. Then as $R \to \infty$, 
    $$ \Vol (B(\bar{\calL}_\gamma, R)) \sim \frac{\Lambda^2}{h}
    \nu(\calC_{\gamma, \ext} ) e^{hR}. $$
\end{introthm}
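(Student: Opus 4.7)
The plan is to adapt the proof of Theorem~1.3 of \cite{abem} to the tubular neighborhood setting. First, lift to Teichm\"uller space to write $\Vol(B(\bar\calL_\gamma, R)) = \Vol(\calF)$, where $\calF$ is a fundamental domain for the $\gammacyl$-action on the tube $\calN_R := \{Y \in \calT_g : d(Y, \calL_\gamma) \leq R\}$. The form of the target asymptotic suggests introducing ``tubular polar coordinates'' based on $\calL_\gamma$, directly analogous to the polar coordinates based at a single point $X$ used in \cite{abem}.

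The central construction is a parameterization $\Psi \colon \sfU \times [0, R] \to \calN_R$, where $\sfU := \{\zeta \in B_{\ext}(\calL_\gamma) : \min_{X \in \calL_\gamma} \ext(\zeta, X) = 1\}$. By Kerckhoff's log-convexity of $X \mapsto \ext(\zeta, X)$ along Teichm\"uller geodesics, this minimum is attained at a unique point $X_\zeta \in \calL_\gamma$ for every $\zeta$ other than the stable and unstable foliations of $\gamma$ (the two endpoints of $\calL_\gamma$ in $\mf$). Set $\Psi(\zeta, t)$ to be the result of following the Teichm\"uller geodesic issuing from $X_\zeta$ with horizontal foliation $\zeta$ for time $t$. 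The map is $\gammacyl$-equivariant, and away from a set of $\nu$-measure zero it is a measurable bijection onto $\calN_R \setminus \calL_\gamma$.

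Applying Hubbard-Masur together with the scaling identity $\nu(sA) = s^h \nu(A)$ for Thurston measure, the pullback of the Teichm\"uller volume form under $\Psi$ takes the form $\Lambda\, e^{ht}\, d\sigma(\zeta)\, dt$ for the cone measure $\sigma$ on $\sfU$; the factor $\Lambda$ arises from integrating out the vertical foliation direction. Combining this with the polar decomposition $d\nu = s^{h-1}\, ds\, d\sigma$, which relates $\sigma$ on a $\gammacyl$-fundamental domain to $\nu(\calC_{\gamma, \ext})$, and integrating gives (after accounting for one further factor of $\Lambda$ coming from the proper normalization of the Masur-Veech measure on the unit tangent sphere)
\begin{equation*}
\Vol(\calF) = \frac{\Lambda^2}{h}\, \nu(\calC_{\gamma, \ext})\, e^{hR} + o(e^{hR}).
\end{equation*}

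The main obstacle is controlling the contribution from the boundary of the parameterization: foliations $\zeta \in \sfU$ near the two endpoint foliations of $\calL_\gamma$, for which $X_\zeta$ escapes to infinity along $\calL_\gamma$, and points of $\calN_R$ very close to $\calL_\gamma$ where the polar structure degenerates. Showing that these regions contribute only $o(e^{hR})$ requires thin-part-type estimates analogous to those in \cite{abem}, using the properness of the $\gammacyl$-action on $B_{\ext}(\calL_\gamma)$ minus the two endpoint foliations, which is precisely the condition making $\calC_{\gamma, \ext}$ a well-defined quotient with $\nu(\calC_{\gamma, \ext}) < \infty$. The exact Jacobian computation for $\Psi$, combining Kerckhoff's first-variation formula for extremal length with the geodesic-flow scaling of Thurston measure, is the most technically delicate step; it may be cleaner to carry out the analysis at the level of the unit tangent bundle $Q^1 \calT_g$ equipped with Masur-Veech measure, where $\Lambda^2$ appears naturally as the mass of a single unit tangent sphere.
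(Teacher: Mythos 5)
Your proposed proof takes a genuinely different route than the paper's, and it contains a significant gap. The paper proves Theorem~\ref{VolumeResult} as a quick consequence of Theorem~\ref{Cylresult}: it unfolds $\Vol(B(\bar{\calL}_\gamma, R))$ to $\int_{\calM_g} |\gammacyl \backslash (\Gamma \cdot X \cap B(\calL_\gamma, R))| \, d\Vol(X)$, bounds the integrand uniformly in $X$ by $C' e^{hR}$ via an injection of $\gammacyl \backslash (\Gamma \cdot X \cap B(\calL_\gamma, R))$ into $\Gamma\cdot X\cap B(O,R+L)$ together with Theorem~5.1 of \cite{abem}, and then applies dominated convergence so that the pointwise asymptotics from Theorem~\ref{CylCount} pass inside the integral. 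You instead attempt a direct adaptation of the polar-coordinate volume argument of \cite{abem} to ``tubular polar coordinates'' around $\calL_\gamma$, bypassing the orbit counting entirely.

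The gap is the claim that $\Psi$ is, away from a measure-zero set, a measurable bijection onto $\calN_R \setminus \calL_\gamma$. The tube $\calN_R$ is defined by \emph{Teichm\"uller} distance to $\calL_\gamma$, while $\Psi$ issues geodesic rays from $X_\zeta$, the point of $\calL_\gamma$ minimizing the \emph{extremal length} $\ext(\zeta, \cdot)$. For $Y \in \calN_R$ with nearest-point projection $H \in \proj_{\calL_\gamma} Y$, the vertical foliation $\zeta$ of $[H, Y]$ in general has $X_\zeta \neq H$; the two only become close when $Y$ is far from $\calL_\gamma$ along a suitably recurrent geodesic, which is exactly the content of Proposition~\ref{CompTeichExt}. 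Hence the outer boundary of $B(\calL_\gamma, R)$ and the image $\Psi(\sfU \times \{R\})$ disagree by a collar of bounded width, and since most of the volume of $B(\calL_\gamma, R)$ lies within bounded distance of that boundary, the collar carries a positive fraction of $e^{hR}$, not an $o(e^{hR})$ error. This is not a thin-part or endpoint-foliation effect that can be set aside; controlling it requires the Teichm\"uller-versus-extremal-length comparison of Section~\ref{BusemannAprx} and an approximation scheme of the sort carried out in Section~\ref{proofofA}. Making your approach rigorous would therefore essentially reproduce the hard work of Sections~\ref{BusemannAprx} and~\ref{MainTheorem}, recast for volume rather than lattice points, whereas the paper does that work once for the orbit count and gets the volume statement almost for free.
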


\subsection{Remarks and the relation to other works}
If $\Sigma$ is a surface of constant negative curvature $-1$ and $\Gamma$ its fundamental group, then Theorem 2.5 of \cite{EskinMullen} gives
$$|B(\bar{\calL}_\gamma, R) \cap \Gamma \cdot [x]| \sim \frac{\operatorname{Length}(\bar{\calL}_\gamma)}{\operatorname{Area}(\Sigma)} e^R
\gap \text{ as } \gap R \to \infty, $$ 
%
where the terms in the above expression are defined in the same way as before. A calculation in hyperbolic metric shows Theorem \ref{VolumeResult} in this setting, namely, as $R \rightarrow \infty$,
$$ \operatorname{Area} (B(\bar{\calL}_\gamma, R)) \sim \operatorname{Length}(\bar{\calL}_\gamma) e^R.$$

For $M$ a compact manifold of (variable) negative curvature, we can define $\Gamma, \, \gamma, \, \calL_\gamma$ and $\bar{\calL}_\gamma$ similarly. The asymptotics for $| \Gamma \cdot [x] \cap B(\bar{\calL}_\gamma, R)|$ can be obtained as a special case of \emph{common perpendicular counting}. 
To explain this, let $\calL'_\gamma$ and $x'$ be the images of $\calL_\gamma$ and $x$ under the covering map $\Pi \from \widetilde{M} \to M$. 
Then $ \Gamma \cdot [x] \cap B(\bar{\calL}_\gamma, R)$ is in one-to-one correspondence with $\operatorname{Perp}(x', \calL'_\gamma, R)$, the perpendiculars from $x'$ to $\calL'_\gamma$ of length less than $R$, 
where such a perpendicular is defined as a locally geodesic path that starts from $x'$ and arrives perpendicularly at $\calL'_\gamma$.
It follwos from Theorem 1 of \cite{CommonPerp} that for some constant $c_\gamma > 0$, 
$$ |\Gamma \cdot [x] \cap B(\bar{\calL}_\gamma, R)| \sim c_\gamma e^{\delta R} 
\gap \text{ as } R \to \infty.$$
Here, $\delta$ is the topological entropy of the geodesic flow on $T^1 M$, the unit tangent bundle of $M$. Moreover, 
under some additional conditions, an exponentially small error term is obtained for the above asymptotics. (See Theorem 3 of the same paper for the precise statement.)

As a final remark, let us mention that our methods for proving Theorem \ref{Cylresult} are quite flexible. 
In particular, Theorem \ref{Cylresult} can be proved for an arbitrary compact set $\calK_\gamma \subset \calC_\gamma$ replacing $\bar{\calL}_\gamma$, and the proof is word for word the same, given we change an $\epsilon$--net of $\bar{\calL}_\gamma$ by an $\epsilon$--cover of $\calK_\gamma$. (see Section \ref{ProofOutline}.) 
Volume asymptotics in this case can be obtained in the same way as we obtained Theorem \ref{VolumeResult} from Theorem \ref{Cylresult}.

\subsection{The outline of the proof} \label{ProofOutline}
Theorem \ref{VolumeResult}, proved at the end of Section \ref{PrelimMainThm}, follows easily from Theorem \ref{Cylresult} using an estimate obtained in Theorem 5.1 of \cite{abem} .
Hence the main task is to prove Theorem \ref{Cylresult}. Let $\gamma$ and $\calL$ be as in Section \ref{statement} and fix a point $P \in \calT_g$. By definition, $\Gamma \cdot  [P] \cap B(\bar{\calL}_\gamma, R)$ is in one to one correspondence with 
\begin{align} \label{CountingSet}
    \gammacyl \backslash (\Gamma \cdot P \cap B(\calL_\gamma, R) ) = \{ \gammacyl.X  : X \in \Gamma \cdot P \cap B(\calL_\gamma, R)  \},
\end{align}
where
$$B(\calL_\gamma, R) = \{ X \in \calT_g : d(X, \calL_\gamma) \leq R \}. $$
Note that taking the quotient by $\gammacyl$ in (\ref{CountingSet}) is justified since $\Gamma \cdot P \cap B(\calL_\gamma, R)$ is $\gammacyl$--invariant.

Fix a point $O \in \calL$ and for an $\epsilon > 0$, let $O = X_0, X_1, ..., X_N = \gamma O$ be an $\epsilon$--net in $[O, \gamma O]$, i.e., the geodesics connecting $X_i$ to $X_{i + 1}$ are disjoint for $0 \leq i < N$ and $\sup_{0 \leq i < N} d(X_i, X_{i + 1}) < \epsilon$.
%
Translating this net by the powers $\gamma$, we get a $\gamma$--invariant $\epsilon$--net $(..., X_{-1}, X_0, X_1, ...)$ of $\calL_\gamma$. Define the closest point map
$$ \calP \from \pmf \to \ZZ \gap by \gap \calP [\zeta] = i \, \text{ if } \, \ext(\zeta, X_i) = \inf_{j \in \ZZ} \ext(\zeta, X_j).$$
Setting $\calA_i = \calP^{-1} (i)$, we obtain a $\gamma$--invariant partition of $\pmf$. 
Let $S(X_i, \calA_i, R)$ be the sector of radius $R$ centered at $X_i$ and observing $\calA_i$, namely,
all the points $Y \in \calT_g$ such that $d(X_i, Y) \leq R$ and the geodesic connecting $X_i$ to $Y$ hits the boundary at an element of $\calA_i$. (see \ref{PrelimMainThm} for a precise definition.)

The main geometric idea of this paper is that $ S(X_i, \calA_i, R)$'s are almost disjoint and they almost cover all of $B(\calL_\gamma, R)$. (See the discussion just before Lemma \ref{SectorsRDisjoint}.) 
Since  $\gamma S(X_i, \calA_i, R) = S(X_{i + N}, \calA_{i + N}, R)$, 
$$ \sum_{i = 0}^{N-1} |\Gamma \cdot  P \cap S(X_i, \calA_i, R) |$$
gives a good approximation for $|\gammacyl \backslash (\Gamma \cdot P \cap B(\calL_\gamma, R) )|$. The asymptotics of $|\Gamma \cdot  X \cap S(X_i, \calA_i, R) |$ as $R \rightarrow \infty$ is given by \cite{abem}. Summing up these asymptotics as the $\epsilon$--net $(X_i)$ in $\calL_\gamma$ gets finer, namely $\epsilon \to 0$, we obtain the right hand side of (\ref{CylresultEq}).

To make these ideas work, we need to approximate each $\calA_i$ from inside and outside by open sets $\calU_i \subset \calA_i \subset \calV_i$ and squeeze the above sum between the corresponding sums for $\calU_i$ and $\calV_i$  replacing $\calA_i$. To prove that these lower and upper bounds both converge to the right-hand side of (\ref{CylresultEq}), we need the boundary of the partition $\{ \calA_i \}$ to have measure zero.
This is proved in Section \ref{EquidistNeglig} and the proof uses Theorem \ref{ExtDerResult}. (see Section \ref{derivativeintro}) The only result from Section \ref{EquidistNeglig} that is used in the rest of the paper is Proposition \ref{measure zero}.
Section \ref{BusemannAprx} is devoted to the statement and proof of Proposition \ref{CompTeichExt}, which is the main tool we use to compare extremal and \Teich lengths. 
In Section \ref{MainTheorem}, we carry out the sector approximation scheme that we mentioned earlier. Both of the facts that $S(X_i, \calA_i, R)$'s are almost disjoint and that they almost cover $B(\calL_\gamma, R)$ are applications of Proposition \ref{CompTeichExt}.

\subsection{A formula for the derivative of extremal length} \label{derivativeintro}
We end this introduction by stating a formula that we obtained in Section \ref{EquidistNeglig} in the course of proving Proposition \ref{measure zero}.
For $X \in \calT_g$ and $\zeta \in \mf$, denote the extremal length of $\zeta$ in $X$ by $\ext(\zeta, X)$. 
Fixing $\zeta$, we can consider $E_\zeta = \ext(\zeta, \centerdot)$ as a function from $\calT_g$ to $\RR$.
This function is differentiable and its derivative at $X \in \calT_g$, $d_X E_\zeta \from T_X (\calT_g) \to \RR$, is given by the Gardiner's formula \cite{Gardiner}
$$d_X E_\zeta (\mu) = 2 \Re \int_X \mu. \calV_X^{-1} (\zeta),$$
where $T_X (\calT_g)$ is the tangent space to $\calT_g$ at  $X$, $\mu \in T_X (\calT_g)$ is a Beltrami differential and the homeomorphism $\calV_X \from Q(X) \to \mf$ is defined by sending a quadratic differential to its vertical measured foliation.

If we fix $X \in \calT_g$ instead, we can define
$$E_X \from \mf \to \RR \gap \text{ by } \gap
E_X (\zeta) =  \ext(\zeta, X). $$
In order to compute the derivative of $E_X$ we need a differential structure on $\mf$. In general, the manifold $\mf$ equipped with train-track charts is only piecewise linear.
However, if $\zeta$ is generic, meaning that it does not have a leaf connecting any two singularities and all the singularities are simple, then $\mf$ is smooth at $\zeta$ (in a sense to be defined at the beginning of Section \ref{ProofOfNeglig}), hence the tangent space at this point to $\mf$, $T_\zeta \mf$, is defined. The derivative of $E_X$ at such a $\zeta$ is given by the following theorem: (for the precise statement see Theorem \ref{DerOfE_X}.)

\begin{introthm} \label{ExtDerResult}
    Fix $X \in \calT_g$ and let $\zeta \in \mf$ be a generic measured foliation. Then $E_X$ is smooth at $\zeta$ and there is an $\eta \in T_\zeta \mf$ such that 
    $$ d_\zeta E_X \from T_\zeta \mf \to \RR 
    \gap \text{ is given by } \gap
    d_\zeta E_X(\centerdot) = \wth(\eta, \centerdot),$$
    where $\wth$ stands for the Thurston symplectic form (see Section \ref{backgroundTT} for a definition). Moreover, $\eta$ can be completely described in certain train track coordinates around $\zeta$.
\end{introthm}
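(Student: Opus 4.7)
The plan is to reduce the derivative computation to a concrete variational problem on $Q(X)$ via the Hubbard-Masur homeomorphism $\calV_X$, and then translate the resulting formula back into train track coordinates on $\mf$.

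First I would establish smoothness. Because $\zeta$ is generic, it is carried by a trivalent train track $\tau$ with all branch weights positive; the weight map identifies a neighborhood of $\zeta$ in $\mf$ with an open subset of the linear subspace of $\RR^{B(\tau)}$ defined by the switch conditions, which puts a canonical linear structure on $T_\zeta \mf$. The Hubbard-Masur inverse $\calV_X^{-1} \from \mf \to Q(X)$ is smooth at $\zeta$ (the Hubbard-Masur differential is determined smoothly by its periods over the branches of $\tau$), and $q \mapsto \|q\|$ is smooth on the open subset of $Q(X)$ consisting of differentials with only simple zeros. Composing, $E_X = \|\cdot\| \circ \calV_X^{-1}$ is smooth at $\zeta$.

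Next, I would compute $d_\zeta E_X$ explicitly. Set $q = \calV_X^{-1}(\zeta)$ and, for $v \in T_\zeta \mf$, let $\dot q = d_\zeta \calV_X^{-1}(v) \in Q(X)$. Differentiating $\|q + t\dot q\| = \int_X |q + t \dot q|$ at $t = 0$ gives
$$ d_\zeta E_X(v) \;=\; \int_X \frac{\Re(\dot q \, \bar q)}{|q|}, $$
where the integration is with respect to the natural area form of $q$. Passing to local flat coordinates for $q$ (in which $q = dz^2$ and the area form is $dx \wedge dy$), and writing $\dot q = f(z)\, dz^2$, this reduces to $\int_X \Re f(z)\, dx\,dy$.

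The third step is to interpret this as a Thurston symplectic pairing. In the chart fixed above, one may take the branches of $\tau$ to be transverse arcs to the vertical foliation of $q$, so that the weight $w_b$ of a branch $b$ is literally $\int_b |\Re \sqrt{q}|$. An infinitesimal weight variation $(\dot w_b)$ then prescribes the period data that determines $\dot q$ uniquely, and the above integral becomes, after a switch-condition-respecting rearrangement, a bilinear expression in $(\dot w_b)$ paired against fixed coefficients built from the horizontal transverse measures of $q$. This is precisely the shape of $\wth(\eta, \cdot)$ for an $\eta \in T_\zeta \mf$ whose weights can be read off from the computation. Morally $\eta$ corresponds to the horizontal foliation $\calH(q)$: the identity reflects the classical formula $\|q\| = i(\calV(q), \calH(q))$ together with the fact that only the $\zeta$-variation, and not the $\calH$-variation, contributes at first order. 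The main obstacle is the bookkeeping at singularities: even for generic $\zeta$ the Hubbard-Masur differential has simple zeros at which flat coordinates branch, and $\tau$ must be arranged compatibly with these so that branch weights are clean period integrals. Realizing $\tau$ as the dual graph of a polygonal decomposition of the $q$-flat surface (for example, horocyclic neighborhoods of cone points joined by short horizontal segments) localizes the integral to polygons, where the linear algebra of periods and switch conditions directly produces the Thurston symplectic form and the explicit train-track description of $\eta$ promised in the statement.
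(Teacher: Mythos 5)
Your proposal follows the same skeleton as the paper's proof: show smoothness of $E_X=\|\cdot\|\circ\calV_X^{-1}$ near a generic $\zeta$ using the simplicity of the zeros of $q=\calV_X^{-1}(\zeta)$, compute $d_q\|\cdot\|$ by the Royden/Gardiner variational formula, and transport the result to train-track coordinates on $\mf$ via the derivative of $\calV_X$. Up through your second step this matches Lemma~\ref{DerOfNorm}. The gap is in your third step. You assert that ``after a switch-condition-respecting rearrangement'' the integral $\int_X\Re(\dot q\,\bar q)/|q|$ becomes $\wth(\eta,\cdot)$, and you say that realizing $\tau$ as the dual of a polygonal decomposition ``directly produces the Thurston symplectic form.'' That identification is exactly the nontrivial content, and you leave it as an assertion. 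In the paper this is handled by two separate ingredients: (i) Lemma~\ref{DerInTTCord}, which computes $D_\phi(\varphi_\tau^{-1}\circ\calV)$ explicitly via the Douady--Hubbard period derivative for the train track adapted to a \emph{saddle triangulation} of $q$, so that branch weights are literally signed horizontal periods of saddle connections; and (ii) the cited theorem of Dumas that $\calV_X$ pushes the area form $\omega_\phi$ on $Q(X)$ forward to the Thurston form $\wth$ on $\mf$. You invoke neither; you would have to reprove (ii) by hand in your polygon chart, and it is not clear that the dual of a ``horocyclic neighborhoods plus short horizontal segments'' decomposition even has the correct switch combinatorics to carry $\zeta$, whereas the saddle triangulation construction gives this for free (flips correspond to splits).

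Two smaller issues. Your remark that ``morally $\eta$ corresponds to $\calH(q)$'' and that ``only the $\zeta$-variation contributes at first order'' when differentiating $\|q\|=\I(\calV(q),\calH(q))$ is heuristic and, as a derivation, wrong: as $\zeta$ moves in $\mf$ both $\calV(q)$ and $\calH(q)$ vary, and the $\calH$-variation term $\I(\zeta, d\calH(q))$ does not vanish. The correct $\eta$ in the paper is $4\,D\calV_\phi(i\phi)$, and its train-track weights in the adapted chart are the signed \emph{vertical} periods $4w(\Delta,i\phi)$; it is these, not the intersection-number identity, that encode the horizontal transverse measures. Secondly, you claim smoothness of $\|\cdot\|$ on the locus of simple zeros as if it were elementary; the paper justifies differentiability by citing Dumas's Theorem 5.3 and Royden's Lemma 1, and it is worth being explicit that this is where the hypothesis $\phi\in\calP(1,\ldots,1)$ enters.
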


\subsection*{Notation}
For a set $A$ and subsets $A_\delta$ indexed by $\delta \in (0, s)$ for some $s > 0$, we say $A_\delta \uparrow A$ as $\delta \downarrow 0$ if the follwoing holds: $A_{\delta_2} \supseteq A_{\delta_1}$ for $\delta_2 < \delta_1$ and $\bigcup A_\delta = A$. Similarly, we say $B_\delta \downarrow B$ as $\delta \downarrow 0$ if $B_{\delta_2} \subseteq B_{\delta_1}$ for $\delta_2 < \delta_1$ and $\bigcap B_\delta = B$. Finally, for real numbers $a, b, c$ we write $a \simeq_c b$ if $|a - b| < c$.

\subsection*{Acknowledgements} I would like to thank my advisor, Kasra Rafi, for suggesting the problem and his constant support during the writing of this paper. 

\section{Background on Teich\-m\"u\-ller\ space} \label{background}

\subsection*{\Teich space}
Let $S$ be a compact surface of genus $g \geq 2$. We denote the \Teich space of $S$ by $\calT_g$. This is the space of equivalent classes of orientation preserving homeomorphism  $f \from S \to X$, where $X$ is a Riemann surface and $f \from S \to X$ is said to be equivalent to $g \from S \to Y$ if there exists a biholomorphism $h \from X \to Y$ such that $g$ is isotopic to $h \circ f$. We denote an element $ [f \from S \to X ]$ of $\calT_g$ by $X$ and keep the marking in the back of our mind.
The mapping class group (or modular group) of $S$ is denoted by $\Gamma$. This is the group of orientation preserving homeomorphisms of $S$ up to isotopy.
An element of mapping class group $[\gamma \from S \to S]$ acts on $[f \from S \to X] \in \calT_g$  by change of marking, namely $[\gamma].[f] = [f \circ \gamma^{-1}]$.  
Taking the quotient of $\calT_g$ by $\Gamma$ we obtain the moduli space $\calM_g = \Gamma \backslash \calT_g$.

\subsection*{Quadratic differentials}
For a Riemann surface $X$, let $Q(X)$ be the space of holomorphic quadratic differentials (or quadratic differentials for short) on $X$. 
For a $\phi \in Q(X)$, define the norm of $\phi$ to be
$$ |\phi| = \int_X |\phi(z)| |dz|^2 .$$
The union of $Q(X)$ for $X \in \calT_g$ forms the space of quadratic differentials, denoted by $\QT_g$. 
More precisely, $\qt_g$ is the space of equivalent classes 
$[f \from S \to (X, \phi)]$, where $f$ and $X$ are as before and $\phi \in Q(X)$.
We denote $[f]$ by $(X, \phi)$ or just $\phi$.
Sending $(X, \phi)$ to $X$ gives a projection map $\pi \from \QT_g \to \calT_g$. 
The principal domain $ \calP(1, .., 1) \subset \qt_g$ is defined to be quadratic differentials with only simple zeros.

A flat chart for $(X, \phi)$ is a holomorphic chart $\varphi \from U \subset \CC \to X$ on which the pullback of $\phi$ is $dz^2$. 
The change of coordinates between two flat charts is of the form $z \to \pm z + c$.  
For $\phi \in \qt_g$ and $A \in \SL_2 (\RR)$,  $A. \phi$ is defined as the unique element $\psi \in \qt_g$ such that the change of marking $\phi \rightarrow A. \psi$ is given by
multiplication by $A$ on the corresponding flat charts. With this definition, \Teich geodesic flow is given by
$$g_t = \begin{pmatrix}
  e^t & 0\\ 
  0 & e^{-t}
\end{pmatrix}.$$

The cotangent space at $X \in \calT_g$ is naturally identified with $Q(X)$, hence the norm on $Q(X)$ induces a Finsler norm on \Teich space. The resulting metric is called the \Teich metric, the distance between two points $X, Y \in \calT_g$ is denoted by $d(X, Y)$ and the geodesic connecting $X$ to $Y$ is shown by $[X, Y]$. For $X \in \calT_g$ and $\zeta \in \mf$, there exists a unique $\phi \in Q(X)$ such that $\phi$ has $\zeta$ as its vertical measured foliation (\cite{hm}). 
For such $X$ and $\zeta$, define
$$[X, \zeta) = \{ \pi (g_t .\phi),\, 0 \leq t \leq \infty \}.$$




\subsection*{Extremal length}
Let $\calV \from \mathcal{QT}_g \to \mf$ be the function that sends a quadratic differential to its vertical measured foliation. For $X \in \calT_g$, the restriction of $\calV$ to $Q(X)$,  $\calV_X \from Q(X) \to \mf$, is a homeomorphism, and the extremal length of $\zeta \in \mf$ at such an $X$ can be defined by
$$\Ext(\zeta, X) = | \calV_X^{-1} (\zeta)|.$$

The Busemann functions are defined by
\begin{align*}
    \beta( \zeta, X) = \frac{1}{2} \log \ext (\zeta, X);\\
    \beta (\zeta, X, Y) = \beta (\zeta, Y) - \beta(\zeta, X).
\end{align*}
Note that $\beta (\zeta, X, Y)$ only depends on $[\zeta]$, hence it can be denoted by $\beta( [\zeta], X, Y)$.
Kerschoff inequality states that
$$ \frac{\ext(\zeta, Y)}{\ext(\zeta, X)} \leq 
e^{2 d(X, Y)},$$
hence taking logarithms we obtain 
$$ \beta(\zeta, Y) \leq \beta(\zeta, X) + d(X, Y).$$
If we think of $\beta(\zeta, X)$ as the "length at infinity" of $[X, \zeta)$, the above can be thought of as the triangle inequality in $\bigtriangleup(\zeta, X, Y)$.

\section{equidistant measured foliations are negligible} \label{EquidistNeglig}

\subsection{Background on train tracks} \label{backgroundTT}

We define a train track to be an embedded $3$--regular graph in $S$ such that its vertices are locally modeled on Figure \ref{switch}. The vertices of this graph are called \emph{switches} and the edges are called \emph{branches} of the train track. 
In the same figure, $a$ is called an \emph{incoming} branch and $b, c$ are called \emph{outgoing} branches.
A branch is said to be \emph{large} if it is the outgoing branch for both of its endpoints. A \emph{splitting} along the large branch $e$ is shown in Figure \ref{flip}.
A train track $\tau$ is said to be \emph{complete} if all the components of $S - \tau$ are cusped triangles.

\begin{figure}[ht]
    \setlength{\unitlength}{0.008\linewidth}
    \begin{picture}(20, 20)
    \put(-10,-3){\includegraphics[width=40\unitlength]{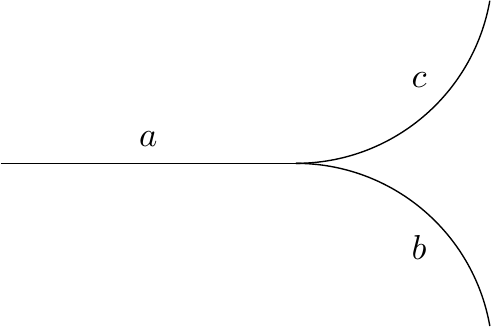}}
    \end{picture}
    \caption{A switch} 
    \label{switch} 
\end{figure}

A function $w$ from the set of branches of $\tau$ to $\RR$ is called a \emph{weight} if we have $w(a) = w(b) + w(c)$ for every switch as in Figure \ref{switch}. Let $W(\tau)$ be the set of all weights on $\tau$.
A weight $w \in W(\tau)$ is said to be \emph{positive} (or a \emph{measure} on the train track $\tau$), denoted by $w > 0$, if $w(a) > 0$ for all the branches $a$ of $\tau$. Denote the set of all measures on $\tau$ by $W^+(\tau)$.

Given $\mu \in W^+(\tau)$, we can foliate a rectangular neighborhood of $\tau$ according to $\mu$. Shrinking the components of the complement of this neighborhood, we get a measured foliation, denoted by $ \calF(\tau, \mu)$.
A measured foliation $\zeta$ is said to be \emph{carried} by $\tau$ if there exists a measure $\mu \in W^+ (\tau)$ such that $\zeta = \calF(\tau, \mu)$.

A measured foliation is called \emph{generic} if it has only simple singularities and does not have a leaf connecting any two of its singularities.
Let $\zeta \in \mf$ be generic and assume $\tau$ is a train track carrying $\zeta$, say, $\zeta = \calF(\tau, \mu)$ for some $\mu \in W^+(\tau)$. Since $\zeta$ is generic, $\tau$ should be complete, hence $W^+(\tau)$ is of maximal dimension $6g-6$ and
$$ \varphi_\tau \from W^+( \tau) \to \mf \gap \text{ defined by } \gap \mu_1 \mapsto \calF(\tau, \mu_1)$$
gives a chart around $\zeta$. 
If the train track $\tau'$ carries $\zeta$ as well, then the change of coordinates $\varphi_\tau^{-1} \circ \varphi_\tau$ is linear in a neighbourhood of $\mu$. This gives $\mf$ a linear structure at such a $\zeta$. (see the explanation after Proposition \ref{measure zero}.)

For a train track $\tau$, define the antisymmetric pairing
$\wth \from W(\tau) \times W(\tau) \rightarrow \RR$ by
\begin{align} \label{ThurstonForm}
    \wth(w_1, w_2) = 
    \frac{1}{2}
     \sum_{v} \det \begin{pmatrix}
        w_1(b_v) & w_1(c_v) \\
        w_2(b_v) & w_2(c_v)
     \end{pmatrix},
\end{align}
where the sum is over all the switches $v$ of $\tau$ and at each switch $v$, the incoming branch and outgoing branches are labeled by $a_v, b_v, c_v$ respectively, in such a way that $a_v b_v c_v$ is clockwise.
Since $W(\tau)$ is a vector space, $T_\mu W^+ (\tau)$ is naturally identified with $W(\tau)$ for every $\mu \in W^+ (\tau)$, hence (\ref{ThurstonForm}) gives an antisymmetric form on $W^+ (\tau)$, denoted by $\wth$ as well.
It can be proved that $\wth$ is invariant under the change of coordinates, hence it gives rise to an antisymmetric form on $\mf$, called the \emph{Thurston symplectic form}.


Let $( X, \phi) \in \qt_g$ and denote the zeros of $\phi$ on $X$ by $\Sigma$. A \emph{saddle triangulation} (or \emph{triangulation} for short) of $\phi$ is a triangulation of $X$ whose vertices belong to $\Sigma$ and the edges are straight lines in the flat metric induced by $\phi$. 
Fix such a triangulation $\Delta$ of $\phi$.
For a triangle $ABC \in \Delta$, a \emph{comparison triangle} is defined as a flat model of $ABC$, namely, this is a Euclidean triangle $A'B'C'$ together with a flat chart $\varphi \from A'B'C' \to ABC$ that sends $A'$ to $A$, $B'$ to $B$ and $C'$ to $C$. (By triangle here, we mean the union of edges and the interior.) 
Note that the comparison triangle is unique up to translation and reflection from the origin. 

For a triangulation $\Delta$, we can give the structure of a measured train track to the dual graph of $\Delta$ by defining the measure of an edge $e$, dual to the side $BC$ of a triangle $ABC \in \Delta$ to be $|\Re(\overrightarrow{B'C'})|$ where $A'B'C'$ is the corresponding comparison triangle. 
Note that if $\calV(\phi)$ is generic then $A'B'C'$ does not have a vertical side, hence the measure constructed above is indeed positive. 
The train track obtained in this way is called the train track \emph{adapted} to $\Delta$. Observe that, as shown in Figure \ref{flip}, a flip in the triangulation $\Delta$ corresponds to a splitting in the adapted train track and vice versa.

\begin{figure}[ht]
    \setlength{\unitlength}{0.015\linewidth}
    \begin{picture}(20, 20)
    \put(-10,0){\includegraphics[width=40\unitlength]{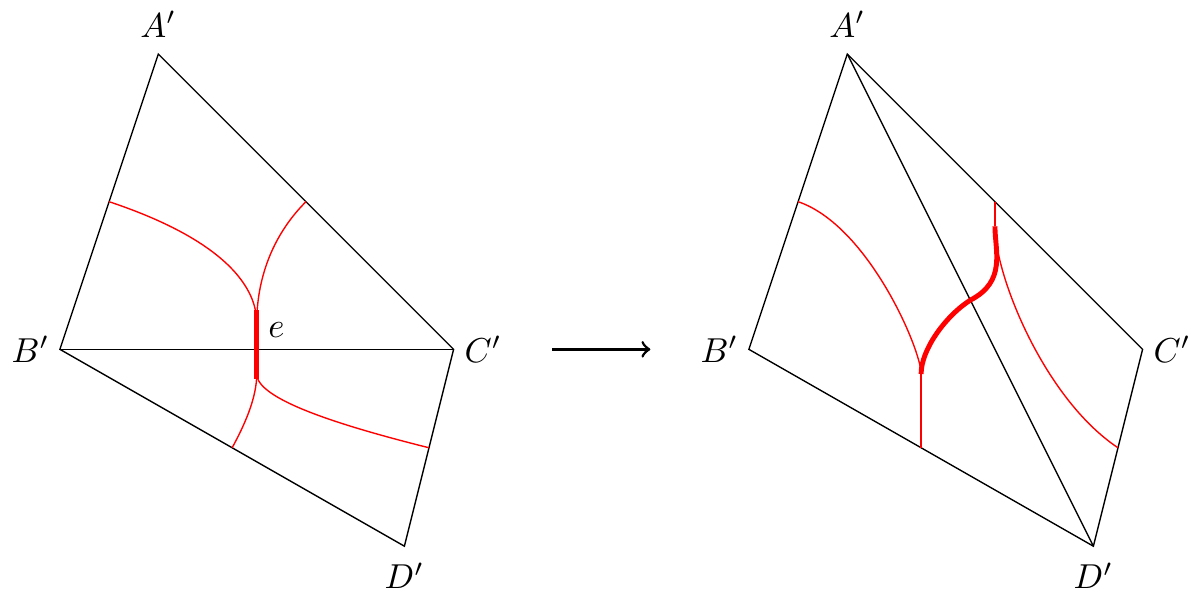}}
    \end{picture}
    \centering
    \caption{A splitting in the train track corresponds \newline to a flip in the triangluation} 
    \label{flip} 
\end{figure}

\subsection{$E(X, Y)$ has measure zero} \label{ProofOfNeglig}

For given $X, Y \in \calT_g$, define
$$E(X, Y) = \{ \zeta \in \MF : \Ext_X (\zeta) = \Ext_Y (\zeta) \}.$$

The goal of this section is to prove the following:

\begin{proposition} \label{measure zero} 
Let $X, Y \in \calT_g$ be distinct. Then $E(X, Y)$ is of Thurston measure zero.
\end{proposition}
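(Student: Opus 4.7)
The plan is to apply the derivative formula from \thmref{ExtDerResult} to the continuous function
\[ f(\zeta) := \Ext(\zeta, X) - \Ext(\zeta, Y), \qquad \zeta \in \MF, \]
whose zero set is $E(X,Y)$. I would argue that $f$ is smooth on a full-measure open subset of $\MF$ and has nonvanishing derivative there, so that the implicit function theorem cuts $E(X,Y)$ out locally as a codimension-one smooth submanifold and hence as a Thurston null set.

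First I would restrict attention to the open subset $G \subseteq \MF$ of generic measured foliations as in Section \ref{backgroundTT}. The complement is covered by countably many train-track coordinate subspaces of strictly smaller dimension (each accounting for a saddle connection or a non-simple singularity), so $\MF \setminus G$ has Thurston measure zero and it suffices to prove that $E(X,Y) \cap G$ is null. On $G$ both $E_X$ and $E_Y$ are smooth by \thmref{ExtDerResult}, and that theorem supplies tangent vectors $\eta_X(\zeta), \eta_Y(\zeta) \in T_\zeta \MF$ for which
\[ d_\zeta f (v) = \wth\bigl( \eta_X(\zeta) - \eta_Y(\zeta),\, v \bigr) \qquad \text{for all } v \in T_\zeta \MF. \]
Since $\wth$ is non-degenerate on $T_\zeta \MF$, this differential vanishes exactly when $\eta_X(\zeta) = \eta_Y(\zeta)$.

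The crux, and the step I expect to be the main obstacle, is to rule out the equality $\eta_X(\zeta) = \eta_Y(\zeta)$ for any $\zeta \in G$. Using the explicit train-track description of $\eta$ promised by \thmref{ExtDerResult}, one expects $\eta_X(\zeta)$ to encode (up to a fixed normalization) the horizontal measured foliation of the quadratic differential $\calV_X^{-1}(\zeta) \in Q(X)$, and similarly $\eta_Y(\zeta)$ the horizontal foliation of $\calV_Y^{-1}(\zeta) \in Q(Y)$. If these coincided, the two quadratic differentials would share both their vertical foliation (namely $\zeta$) and their horizontal foliation; since a transverse pair of measured foliations determines a unique marked flat structure, this would force $X = Y$, contradicting the hypothesis. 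Once this is established, $d_\zeta f \neq 0$ at every $\zeta \in E(X,Y) \cap G$, and the implicit function theorem applied in countably many train-track charts covering $G$ exhibits $E(X,Y) \cap G$ as a locally smooth hypersurface of codimension one, which therefore has Thurston measure zero.
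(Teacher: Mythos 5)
Your proposal follows the paper's proof essentially verbatim: restrict to the full-measure generic locus, write $d_\zeta f = \wth(\eta_X - \eta_Y, \cdot)$ via Theorem~\ref{ExtDerResult}, show $\eta_X \neq \eta_Y$ when $X \neq Y$, and conclude by non-degeneracy of $\wth$ together with a countable train-track chart cover. The step you flag as the crux (that $\eta_X(\zeta)=\eta_Y(\zeta)$ would force the two quadratic differentials over $\zeta$ to share both their vertical and horizontal data, hence to coincide, giving $X=Y$) is exactly the content of the paper's Lemma~\ref{DifferentPoints}, which formalizes it by passing to a common train-track splitting and observing that $(\tau,\mu,w(\Delta,i\phi))$ recovers the period coordinates of $\phi$.
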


A manifold $M$ with charts $\varphi_\alpha \from U_\alpha \to V_\alpha \subset M$ is said to be \emph{smooth} at $x \in M$ if the transition maps are smooth near $x$. 
More precisely, if for all indices $\alpha$ and $\beta$ such that $x \in V_\alpha \cap V_\beta$, $\varphi_{\alpha \beta} = \varphi_\beta ^{-1} \circ \varphi_{\alpha} \from U_{\alpha \beta} \to U_{\beta \alpha}$ is smooth on a neighbourhood of $\varphi_\alpha ^ {-1} x$ where $U_{\alpha \beta} $ is the domain of definition of $\varphi_{\alpha \beta}$.
We define a manifold to be \emph{linear} or \emph{analytic} at a point in a similar way.

If $M$ is smooth at $x$, the tangent space to $M$ at $x$, denoted by $T_x M$, can be defined in the usual way.
A function $f \from M \to \RR$ is said to be \emph{smooth} (\emph{analytic}, \emph{linear}) at a smooth (analytic, linear) point $x \in M$, if it is smooth (analytic, linear) in a neighborhood of $\varphi_\alpha^{-1}(x)$ for a chart $\varphi_\alpha$ that covers $x$. If $f$ is smooth at $x$,  the differential of $f$ at $x$, $d_x f \from T_x M \to \RR$, can be defined in the usual way.

For a given $X \in \calT_g$, define  
$ N \from Q(X) \to \RR$ by $N(\phi) = |\phi|$.
Since $Q(X)$ is a vector space, for every $\phi \in Q(X)$ we can identify the tangent space to $Q(X)$ at $\phi$, $T_\phi Q(X)$, with $Q(X)$. 
Define the following anti-symmetric pairing on $T_\phi Q(X)$:
$$ \omega_\phi (\psi_1, \psi_2) = \frac{1}{4} \Im \left(\int_X \frac{\psi_1 \bpsi_2}{|\phi|} \right). $$
Note that if $\phi \in \calP(1, ..., 1)$, then $\omega_\phi$ is defined for all $\psi_1, \psi_2$, but it's not necessarily so if $\phi$ has non-simple zeros.

\begin{lemma} \label{DerOfNorm}
    Given $X \in \calT_g$, let $\phi \in Q(X) \cap \calP(1, .., 1)$ and $\psi \in T_\phi Q(X) \simeq Q(X)$, then
    $$d_\phi N (\psi) = 4\omega_\phi (i\phi, \psi).$$
\end{lemma}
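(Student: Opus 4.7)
The plan is to differentiate under the integral sign, compute the resulting integrand pointwise, and then recognize it as $4\omega_\phi(i\phi,\psi)$. In a local holomorphic chart write $\phi = \phi(z)\,dz^2$ and $\psi = \psi(z)\,dz^2$, so that $|\phi + t\psi|$ is locally $|\phi(z) + t\psi(z)|$ times the Euclidean area element; it suffices to understand $\frac{d}{dt}|\phi(z) + t\psi(z)|$ at each point and then integrate.

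At every $z$ with $\phi(z) \neq 0$, the scalar function $t \mapsto |\phi(z) + t\psi(z)|$ is smooth near $t = 0$, and a routine calculation via $|w|^2 = w\bar w$ gives its derivative at $0$ to be $\Re(\bar\phi\psi/|\phi|)$. To pass $\frac{d}{dt}$ inside $\int_X$ I will apply dominated convergence to the difference quotient $t^{-1}(|\phi + t\psi| - |\phi|)$: the mean value theorem bounds this quotient pointwise by $|\psi(z)|$ uniformly in $t$, and $|\psi|$ is integrable on the compact surface $X$.

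The one subtlety is the zero set of $\phi$, and this is precisely where the hypothesis $\phi \in \calP(1,\ldots,1)$ enters. Since all zeros are simple, $\phi$ vanishes to first order near each, the integrand $\bar\phi\psi/|\phi|$ stays bounded there, and the finite zero set contributes nothing to the integral. This is the main (mild) obstacle; at a higher-order zero the integrand could blow up and a separate argument would be required. Once the exchange is justified, one has $d_\phi N(\psi) = \Re \int_X \bar\phi\psi/|\phi|$. The remaining step is purely algebraic: from the definition,
$$4\omega_\phi(i\phi,\psi) \;=\; \Im \int_X \frac{(i\phi)\,\bar\psi}{|\phi|} \;=\; \Re \int_X \frac{\phi\,\bar\psi}{|\phi|},$$
and since $\phi\bar\psi/|\phi|$ is the pointwise complex conjugate of $\bar\phi\psi/|\phi|$, the two integrals have equal real parts, giving the claimed identity.
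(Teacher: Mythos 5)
Your directional-derivative computation is correct, and the algebra at the end checks out:
$$
4\omega_\phi(i\phi,\psi)
\;=\; \Im\int_X \frac{i\phi\,\bar\psi}{|\phi|}
\;=\; \Re\int_X \frac{\phi\,\bar\psi}{|\phi|}
\;=\; \Re\int_X \frac{\bar\phi\,\psi}{|\phi|},
$$
which is exactly the Gateaux derivative you extract by dominated convergence. One small inaccuracy: the remark that ``at a higher-order zero the integrand could blow up'' is not right for \emph{this} integrand, since $\bigl|\bar\phi/|\phi|\bigr| = 1$ away from the zeros, so $\bigl|\bar\phi\psi/|\phi|\bigr| = |\psi|$ is bounded on all of $X$ regardless of the orders of the zeros. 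The hypothesis $\phi\in\calP(1,\dots,1)$ is not needed to make your integral converge; it is what the paper flags as the condition under which $\omega_\phi(\psi_1,\psi_2)$ is defined for \emph{general} pairs $\psi_1,\psi_2$, and --- more to the point here --- the condition under which the needed regularity of $N$ is available.

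That regularity is where your argument diverges from the paper's and where a genuine gap remains. The paper's proof is a pair of citations: the proof of Theorem~5.3 of Dumas shows that $N$ is smooth (indeed real analytic) on a neighborhood of any $\phi\in\calP(1,\dots,1)$, and Royden's Lemma~1 supplies the value of the derivative. You replace the Royden citation by a transparent direct computation, which is a legitimate and arguably cleaner route to the \emph{formula}. But your argument only establishes the existence of directional derivatives of $N$. In the proof of Theorem~\ref{DerOfE_X} this lemma is fed into a chain-rule computation for $N = E_X\circ\calV_X$ and used to conclude that $E_X$ is real analytic at $\zeta$; for that one needs $N$ to be at least Fr\'echet differentiable (in fact smooth) near $\phi$, not merely Gateaux differentiable, and this is precisely what the paper takes from Dumas. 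To make your proof a full substitute, you would still need either to quote the Dumas smoothness result or to supply a separate Fr\'echet-differentiability argument (e.g.\ observe that $\phi\mapsto\bar\phi/|\phi|$ is $L^1$-continuous near a $\phi$ with only simple zeros, which in the finite-dimensional space $Q(X)$ upgrades your Gateaux computation to $C^1$) --- and even then the real-analyticity used downstream still rests on the Dumas input.
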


\begin{proof}
    The fact that $N$ is differentiable around $\phi$ follows from the proof of Theorem 5.3 of \cite{dumas}. With the notation introduced in that proof, we have
    $$N(\phi) = N^\epsilon_0 (\phi) + N^\epsilon_1(\phi),$$
    and it is proved that both $N^\epsilon_0$ and $N^\epsilon_1$ are smooth in a neighborhood of $\phi$. 
    The derivative of $N$ is computed in \cite{royden} Lemma 1.
\end{proof}

\textbf{Construction.}
Given $X \in \calT_g, \, \phi \in Q(X)$ and a triangulation $\Delta$ of $\phi$, let $(\tau, \mu)$ be the train track adapted to $\Delta$. For every $\psi \in Q(X)$, define $w(\Delta, \psi) \in W(\tau)$ as follows:
If the branch $e$ of the train track $\tau$ is dual to the side $BC$ of a triangle $ABC \in \Delta$, set
$$ w(\Delta, \psi)(e) = 
\frac{1}{2}
\Re \int_B^C  \frac{\psi}{\sqrt{\phi}},$$
where the integral is taken over the side $BC$ of the triangle $ABC$, and the sign for $\sqrt{\phi}$ is chosen so that $\Re \int_B^C \sqrt{\phi} > 0$.

\begin{lemma} \label{DerInTTCord}
    Let $X \in \calT_g$ and assume $\phi \in Q(X)$ is such that $\calV(\phi)$ is generic. If $\Delta$ is a triangulation of $\phi$ and $\tau$ is its adapted train track, then $\varphi_\tau^{-1} \circ \calV$ is defined and smooth (even real analytic) in a neighbourhood of $\phi$ and its derivative at $\phi$, $D_\phi (\varphi_\tau^{-1} \circ \calV) \from T_\phi Q(X) \to T_\mu W^+ (\tau)$, is given by
    $ \psi \mapsto w(\Delta, \psi)$.
\end{lemma}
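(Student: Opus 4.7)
The plan is to realize $\varphi_\tau^{-1}\circ \calV$ in a neighborhood of $\phi$ as a concrete integral formula and then differentiate, exploiting the fact that $\sqrt{\phi}$ vanishes at the simple zeros of $\phi$ to kill the contributions from the moving vertices of the triangulation. Throughout, fix $\psi \in Q(X) \simeq T_\phi Q(X)$ and consider the curve $\phi_t = \phi + t\psi$; it suffices to show that $t \mapsto \varphi_\tau^{-1}(\calV(\phi_t))$ is real-analytic at $0$ with derivative $w(\Delta,\psi)$.

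First I would establish that the saddle triangulation $\Delta$ persists under small perturbations of $\phi$. Because $\calV(\phi)$ is generic we have $\phi \in \calP(1,\ldots,1)$, so the implicit function theorem produces simple zeros $B_t, C_t, \ldots$ of $\phi_t$ depending real-analytically on $t$, with $B_0 = B$, $C_0 = C$, etc. The flat structure of $\phi_t$ varies continuously in $t$, so for $t$ small the straight-line representative (in the $\phi_t$-flat metric) of the edge from $B_t$ to $C_t$ is unique and varies continuously. This yields a family $\Delta_t$ of saddle triangulations of $\phi_t$ with the same combinatorial type as $\Delta$ and with the same adapted train track (up to isotopy in $S$). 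In particular $\tau$ carries $\calV(\phi_t)$ and $\mu_t := \varphi_\tau^{-1}\circ \calV(\phi_t)$ is well-defined.

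Next I would identify $\mu_t$ explicitly. On the branch $e$ dual to the edge from $B_t$ to $C_t$ in $\Delta_t$, the definition from the paper gives
\[
\mu_t(e) \; = \; \Re \int_{B_t}^{C_t} \sqrt{\phi_t},
\]
where the integral is along the straight-line representative and the sign of $\sqrt{\phi_t}$ is chosen so that this real part is positive at $t=0$ (and hence, by continuity, for all small $t$, so the absolute value can be dropped). The simple-zero condition makes $\sqrt{\phi_t}$ integrable on the closed edge and real-analytic in $t$ on its interior, so the right-hand side depends real-analytically on $t$. This gives the smoothness and analyticity assertion of the lemma.

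Finally I would differentiate at $t=0$ by the Leibniz rule for integrals with moving endpoints. The two boundary contributions are products of the velocities $\dot B(0),\dot C(0)$ with $\sqrt{\phi_t}\big|_{t=0}$ evaluated at the points $B$ and $C$; since both are zeros of $\phi$, these terms vanish identically. What remains is
\[
\frac{d}{dt}\bigg|_{t=0} \mu_t(e) \; = \; \Re \int_B^C \frac{\psi}{2\sqrt{\phi}} \; = \; w(\Delta,\psi)(e),
\]
which is the claimed formula. Under the identification $T_\mu W^+(\tau) \simeq W(\tau)$, this identifies $D_\phi(\varphi_\tau^{-1}\circ \calV)(\psi)$ with $w(\Delta,\psi)$. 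The main obstacle I foresee is the first step: carefully verifying that $\Delta_t$ is genuinely a saddle triangulation of $\phi_t$ with combinatorial type matching $\Delta$ and that its adapted train track agrees with $\tau$ as a weighted train track in $S$. Once that structural step is secured, the differentiation is a clean application of differentiation under the integral sign, with the crucial cancellation of endpoint terms coming directly from $\sqrt{\phi}$ vanishing at the simple zeros.
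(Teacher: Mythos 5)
Your argument is essentially the same as the paper's: it realizes $\varphi_\tau^{-1}\circ\calV$ near $\phi$ as the collection of period integrals $\Re\int_{B_t}^{C_t}\sqrt{\phi_t}$ along the saddle connections of $\Delta$, observes analyticity of these periods, and differentiates. The only difference is that you re-derive the derivative of the period map by hand (using that $\sqrt{\phi}$ vanishes at the simple zeros to kill the endpoint contributions from the moving vertices), where the paper instead cites the Douady--Hubbard formula for exactly this step.
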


\begin{proof}
    If $\phi_1 \in Q(X)$ is near $\phi$, we can choose a triangulation of $\phi_1$, denoted by $\Delta (\phi_1)$, that is close to $\Delta = \Delta(\phi)$. 
    Let $(\tau(\phi_1), \mu(\phi_1) )$ be the measured train track adapted to $\phi_1$.
    Since $\calV(\phi)$ is generic, $\tau(\phi_1)$ is the same as $\tau = \tau(\phi)$ up to isotopy.
    This gives us a map
    $$ W_\Delta \from U \to W^+ (\tau) \gap \text{ defined by } \gap W_\Delta(\phi_1) = \mu(\phi_1),$$
    where $U$ is a small neighbourhood of $\phi$. 
    Let $A$ and $B$ be two of the zeros of $\phi$ such that $AB$ is the side of a triangle in $\Delta$ and assume $e$ is the branch of $\tau$ that is dual to $AB$.  
    The zeros of $\phi_1$ vary as a complex analytic function of $\phi_1 \in Q(X)$, hence for $\phi_1$ close to $\phi$, $A_{\phi_1}$ and  $B_{\phi_1}$ can be chosen such that
    $$ W_\Delta(\phi_1) (e) =
    \int_{A_{\phi_1}}^{B_{\phi_1}} \Re \sqrt{\phi_1}.$$
    $\int_{A_{\phi_1}}^{B_{\phi_1}} \Re \sqrt{\phi_1}$ is called a period function and its derivative is given by Douady-Hubbard formula to be $w(\Delta, \psi)$ (\cite{D-H}).
\end{proof}

Recall that for $X \in \calT_g$,
$E_X \from \mf \to \RR$ is defined by $E_X(\zeta) = \ext(\zeta, X)$.

\begin{theorem} \label{DerOfE_X}
    Let $\zeta $ be a generic measured foliation and $X \in \calT_g$. Then $E_X$ is real analytic at $\zeta$ and its derivative, $d_\zeta E_X \from T_\zeta \mf \to \RR$, is given by
    \begin{align} \label{DerOfE_xEq1}
        d_\zeta E_X (\centerdot) = \wth (\eta, \centerdot)
    \end{align}
    for some $\eta \in T_\zeta \mf$ that depends on $X$ and $\zeta$.
    Moreover, assuming that $\Delta$ is a triangulation of $\phi = \calV_X^{-1} \zeta$ and $(\tau, \mu)$ its adapted train track, 
    then $\eta$ can be computed in the train track chart $\varphi_\tau$ to be $4 w (\Delta, i \phi)$, namely,
    \begin{align} \label{DerOfE_xEq2}
        \eta = D_\mu \varphi_\tau 
        \big(4 w (\Delta, i \phi) \big)
    \end{align}
    where $D_\mu \varphi_\tau \from W(\tau) \cong T_\mu W^+(\tau) \to T_\zeta \mf$ is the derivative of the chart $\varphi_\tau \from W^+(\tau) \to \mf$ at $\mu$.
\end{theorem}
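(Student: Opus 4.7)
The proof factors naturally as $E_X = N \circ \calV_X^{-1}$, so the strategy is to combine Lemma \ref{DerOfNorm} and Lemma \ref{DerInTTCord} via the chain rule and then translate the resulting expression into a Thurston-symplectic pairing. Since $\zeta$ is generic, $\phi = \calV_X^{-1}\zeta$ lies in $\calP(1,\ldots,1)$, so Lemma \ref{DerOfNorm} applies and yields $d_\phi N(\psi) = 4\,\omega_\phi(i\phi, \psi)$. Lemma \ref{DerInTTCord} tells us that $\varphi_\tau^{-1} \circ \calV_X$ is real analytic near $\phi$ with derivative $\psi \mapsto w(\Delta, \psi)$; the analytic inverse function theorem then promotes $\calV_X^{-1}$, and hence $E_X$, to real analyticity at $\zeta$. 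For $v \in T_\zeta \mf$ corresponding to $\tilde v \in W(\tau)$ under $v = D_\mu\varphi_\tau(\tilde v)$, the chain rule gives
\[
d_\zeta E_X(v) \;=\; 4\,\omega_\phi(i\phi, \psi), \qquad \text{where } \psi \in Q(X) \text{ is determined by } w(\Delta, \psi) = \tilde v.
\]

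The heart of the argument is the identity
\[
\omega_\phi(\psi_1, \psi_2) \;=\; \wth\bigl(w(\Delta, \psi_1),\, w(\Delta, \psi_2)\bigr), \qquad \psi_1, \psi_2 \in Q(X).
\]
I plan to prove this triangle by triangle: on a fixed $T \in \Delta$ choose a flat chart with $\phi = dz^2$ and write $\psi_j = g_j(z)\, dz^2$ with $g_j$ holomorphic on $T^\circ$. With $F_2$ a holomorphic primitive of $g_2$ on $T^\circ$ one checks that $g_1 \overline{g_2}\, dx\, dy = \tfrac{1}{2i}\, d\bigl(g_1 \overline{F_2}\, dz\bigr)$, so Stokes' theorem turns the contribution of $T$ to $\omega_\phi$ into an integral along $\partial T$. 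Summing over triangles and reorganizing, the global expression collects into the per-switch $2 \times 2$ determinants in \eqnref{ThurstonForm} evaluated on the weights $w(\Delta, \psi_j)$. As a combinatorial sanity check, the constant case $g_j \equiv \alpha_j$ on a single triangle $ABC$ with $u = B - A,\, v = C - A$ reduces the identity to the linear-algebra fact $\Im(\alpha_1 \overline{\alpha_2})\,\Im(\overline{u} v) = \Re(\alpha_1 u)\Re(\alpha_2 v) - \Re(\alpha_1 v)\Re(\alpha_2 u)$, which is straightforward to verify.

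With the identity in hand, substituting $\psi_1 = i\phi,\, \psi_2 = \psi$ gives $d_\zeta E_X(v) = \wth\bigl(4\,w(\Delta, i\phi),\, \tilde v\bigr)$, and invariance of $\wth$ under change of train-track coordinates transports this to $d_\zeta E_X(v) = \wth(\eta, v)$ with $\eta = D_\mu \varphi_\tau\bigl(4\,w(\Delta, i\phi)\bigr)$, as claimed. The main obstacle will be proving the key identity above: the Stokes argument must accommodate the mild singularities of $g_j = \psi_j/\phi$ at the simple zeros of $\phi$, where $g_j$ has a simple pole but $g_j\, dz \sim w^{-1/2}\, dw$ in an adapted local chart, so both the area integral and the boundary integrals remain absolutely convergent; one must also carefully track the per-edge sign convention in the definition of $w(\Delta, \psi)$ together with the clockwise labeling at each switch in \eqnref{ThurstonForm} in order to match signs while collecting the per-triangle contributions into the global Thurston form.
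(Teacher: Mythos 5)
Your proof follows the same skeleton as the paper's: differentiate the identity $N(\phi_1) = E_X(\calV(\phi_1))$ at $\phi$, apply Lemma \ref{DerOfNorm} to the left-hand side, apply Lemma \ref{DerInTTCord} to identify $D\calV_\phi$ in train-track coordinates, and combine by the chain rule. The one genuine divergence is how you handle the key fact that $\calV_X$ intertwines $\omega_\phi$ with the Thurston form. You propose to prove the identity
\[
\omega_\phi(\psi_1,\psi_2) \;=\; \wth\bigl(w(\Delta,\psi_1),\, w(\Delta,\psi_2)\bigr)
\]
directly by a Stokes argument on each triangle of $\Delta$. The paper instead imports this as a citation: it uses Theorem~5.8 of \cite{dumas}, which says $\wth$ is the push-forward of $\omega_\phi$ under $\calV_X$, and composing with Lemma \ref{DerInTTCord} turns Dumas's statement into precisely your identity read in the chart $\varphi_\tau$. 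So your route is a self-contained re-proof of the ingredient the paper treats as a black box. Both are valid: the citation is shorter and arguably cleaner, while your computation makes the appearance of the per-switch $2\times 2$ determinants from \eqnref{ThurstonForm} concrete (and your constant-coefficient check on a single triangle is exactly the right local model for what the per-switch term must be). If you do carry out the direct route, be prepared for two pieces of bookkeeping you only gesture at: a holomorphic primitive $F_2$ must be chosen separately on each triangle, so the two boundary contributions along a shared saddle connection do not cancel but produce differences whose real parts must be recognized as the weights $w(\Delta,\cdot)$; and the integrand $g_j = \psi_j/\phi$ has simple poles at the vertices, so Stokes must be applied on a punctured triangle and the vanishing of the small-circle contributions argued, not just the absolute convergence of the area integral.
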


\begin{proof}

    We start by taking derivatives of both sides of the identity
    $$ N(\phi_1) = E_X ( \calV(\phi_1))$$
    at $\phi$. Lemma \ref{DerOfNorm} gives the derivative of the left hand side to be  $\omega_\phi(4 i \phi, \centerdot)$. Using chain rule for the right hand side and the fact that $\wth$ is the push-forward of $\omega_\phi$ by $\calV_X$ (\cite{dumas} Theorem 5.8), we get
    $$d_\zeta E_X (\centerdot) = \wth (4 D\calV_\phi (i\phi), \centerdot).$$
    This is (\ref{DerOfE_xEq1}) for $\eta = 4 D\calV_\phi (i\phi)$. To obtain $D\calV_\phi (i\phi)$ in a train track chart, we should take derivative of both sides of 
    $$\calV_X(\phi_1) = \varphi_\tau \circ W_\Delta(\phi_1),$$
    where $W_\Delta$ is defined in the proof of Lemma \ref{DerInTTCord}.
    Chain rule and Lemma \ref{DerInTTCord} then gives (\ref{DerOfE_xEq2}).
\end{proof}

\begin{lemma} \label{DifferentPoints}
    Given $X, Y \in \calT_g$ and a generic measured foliation $\zeta$, let $\eta_1, \eta_2 \in \T_\zeta \mf$ be such that
    $ d_\zeta E_X = \wth( \eta_1, \centerdot)$  and   $d_\zeta E_Y = \wth( \eta_2, \centerdot)$.
    Then $\eta_1 = \eta_2$ implies $X = Y$.
\end{lemma}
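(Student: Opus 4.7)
The plan is to invert the formula in Theorem \ref{DerOfE_X}, showing that the tangent vector $\eta$ encodes the ``imaginary periods'' of the quadratic differential $\calV_X^{-1}(\zeta)$, and then to invoke the fact that complex periods are local coordinates on the principal stratum $\calP(1,\ldots,1)$.

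Set $\phi_1 = \calV_X^{-1}(\zeta)$ and $\phi_2 = \calV_Y^{-1}(\zeta)$; genericity of $\zeta$ places both in $\calP(1,\ldots,1)$. First I would pick a triangulation $\Delta_1$ of $\phi_1$ with adapted train track $\tau$ and then match it with a triangulation $\Delta_2$ of $\phi_2$ whose adapted train track is combinatorially the same $\tau$. The measure $\mu \in W^+(\tau)$ with $\varphi_\tau(\mu) = \zeta$ is determined by $\zeta$ alone, so for each branch $e$ of $\tau$ dual to edges $\sigma_j$ of $\Delta_j$,
\begin{equation*}
\mu(e) = \Re \int_{\sigma_j} \sqrt{\phi_j}, \qquad j = 1, 2.
\end{equation*}
Meanwhile, Theorem \ref{DerOfE_X} reads $\eta_j$ in this chart as $D_\mu \varphi_\tau \bigl( 4\, w(\Delta_j, i \phi_j) \bigr)$, with
\begin{equation*}
w(\Delta_j, i\phi_j)(e) = \tfrac{1}{2} \Re \int_{\sigma_j} \frac{i \phi_j}{\sqrt{\phi_j}} = -\tfrac{1}{2} \Im \int_{\sigma_j} \sqrt{\phi_j}.
\end{equation*}

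Combining the hypothesis $\eta_1 = \eta_2$ with the equality of real periods above would then yield $\int_{\sigma_1} \sqrt{\phi_1} = \int_{\sigma_2} \sqrt{\phi_2}$ on every edge of the triangulation. Since these edges furnish a spanning set of relative cycles giving period coordinates on $\calP(1,\ldots,1)$, and the markings of $\phi_1, \phi_2$ are identified consistently through the common combinatorial structure of $\tau$, the injectivity of period coordinates will force $\phi_1 = \phi_2$ as marked holomorphic quadratic differentials, whence $X = Y$.

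The main obstacle is the first step: choosing triangulations with literally the same adapted train track $\tau$. For $X$ close to $Y$ this is automatic, but in general one has to interpolate along a path $X_t$ in $\calT_g$ and perform a flip on the moving triangulation (a split on the train-track side) each time a saddle connection of $\phi_t = \calV_{X_t}^{-1}(\zeta)$ becomes vertical, so that $\Delta_1$ can be tracked combinatorially all the way to $\Delta_2$. One can also sidestep this issue by working directly with the intrinsic identity $\eta_j / 4 = D\calV_{\phi_j}(i \phi_j) \in T_\zeta \mf$ underlying Theorem \ref{DerOfE_X}: since the transition maps between train-track charts at a generic $\zeta$ are linear, the equality $\eta_1 = \eta_2$ translates, in any convenient chart carrying $\zeta$, into the same equality of complex periods.
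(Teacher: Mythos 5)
Your proposal takes essentially the same approach as the paper: interpret $\eta_j$ as imaginary periods via Theorem~\ref{DerOfE_X}, observe that the real periods agree because $\mu$ is determined by $\zeta$ alone, and conclude $\phi_1 = \phi_2$ from injectivity of triangulation-based period coordinates. The one step you flag as an obstacle---arranging triangulations $\Delta_1, \Delta_2$ of $\phi_1, \phi_2$ whose adapted train tracks coincide---the paper resolves cleanly by invoking the common-splitting theorem for train tracks carrying the same measured foliation (Penner--Harer, Theorem 2.3.1) together with the split/flip correspondence, which is tidier and more self-contained than the path-interpolation workaround you sketch.
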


\begin{proof}
    First let us describe $w(\Delta, i\phi)$ for an arbitrary triangulation $\Delta$ of a quadratic differential $\phi \in \qt_g$.
    Set $(\tau, \mu)$ to be the train track adapted to $\Delta$. Then by the definition of $w(\centerdot, \centerdot)$, 
    if a branch $e$ of $\tau$ is dual to the side $BC$ of a triangle $ABC \in \Delta$, then 
    $$w(\Delta, i\phi) (e) =- \Im(B'C'),$$ 
    where the comparison triangle $A'B'C'$ is chosen such that $\Re (\overrightarrow{B'C'}) > 0$.
    Note that $\phi$ here is uniquely determined by the data $(\tau, \mu, w(\Delta, i \phi) )$.
    
    Now to prove the lemma, let $\phi_1 \in Q(X)$ and $\phi_2 \in Q(Y)$ be such that $\calV(\phi_1) = \calV(\phi_2) = \zeta$.
    Choose an arbitrary triangulation $\Delta_j$ for $\phi_j$ and let $(\tau_j, \mu_j)$ be its adapted train track $(j = 1, 2)$. 
    Since $\zeta = \calF(\tau_1, \mu_1) = \calF(\tau_2, \mu_2)$, the two train tracks should have a common splitting $(\tau, \mu)$ (\cite{PH} Theorem 2.3.1). 
    Since every split along an edge corresponds to a flip in the dual triangulation, we obtain triangulations $\Delta'_1, \Delta'_2$ from $\Delta_1, \Delta_2$ such that they both have $(\tau, \mu)$ as their adapted train track.
    
    By Theorem \ref{DerOfE_X}, the derivative of $\varphi_\tau \from W^+(\tau) \to \mf$ identifies $\eta_j$ with $4w(\Delta'_j, i \phi_j) \in T_\mu W^+ (\tau)$ for $j = 1, 2$.
    Hence, the assumption $\eta_1 = \eta_2$ implies $w(\Delta'_1, i \phi_1) = w(\Delta'_2, i \phi_2)$. However, as mentioned before,  the data $(\tau, \mu, w(\Delta, i \phi) )$ uniquely determines $\phi$. This implies that $\phi_1 = \phi_2$, hence $X = Y$.
\end{proof}

\begin{proof}[Proof of Proposition \ref{measure zero}]
    Let $\calG$ be the subset of $\mf$ consisting of generic measured foliations. Then $\calG$ has full measure, i.e., $\nu (\mf \setminus \calG) = 0$.
    Define
    $$f \from \mf \to \RR \gap \text{ by } \gap f(\zeta) = E_X(\zeta) - E_Y(\zeta),$$
    hence $E(X, Y) = f^{-1} (0)$. 
    Let $\zeta \in  E(X, Y) \cap \calG$ be arbitrary, then according to Theorem \ref{DerOfE_X} there exist  $\eta_1, \eta_2$ such that
    $$d_\zeta E_X (\centerdot) = \wth (\eta_1, \centerdot)
    \gap \text{ and } \gap 
    d_\zeta E_Y (\centerdot) = \wth (\eta_2, \centerdot),$$
    hence the derivative $d_\zeta f \from T_\zeta \mf \to \RR$ is given by
    $$ d_\zeta f (\centerdot) = (
    \eta_1 - \eta_2, \centerdot).$$
    Since $X \neq Y$,  by Lemma \ref{DifferentPoints} we have $\eta_1 - \eta_2 \neq 0$.
    Non-degeneracy of the Thurston form (\cite{PH} Theorem 3.2.4) then implies that $d_\zeta f \neq 0$. Since $f$ is smooth at $\zeta$, $f(\zeta) = 0$ and $d_\zeta f \neq 0$, $f^{-1} (0)$ is locally a submanifold of codimension $1$ around $\zeta$.
    So there is a neighborhood $U_\zeta$ of $\zeta$ such that $E(X, Y) \cap U_\zeta$ is of measure zero.
    Now, covering $E(X, Y) \cap \calG$ by countably many $U_\zeta$'s we obtain $\nu(E(X, Y) \cap \calG) = 0$.
    Since $\calG$ has full measure, $\nu(E(X, Y)) = 0$.
\end{proof}

\section{Comparing \Teich and extremal lengths} \label{BusemannAprx}
\subsection{Projection to a thick geodesic} 
In this section, we state a few general facts about the geodesics that lie completely in the thick part, where by a geodesic we always mean a bi-infinite geodesic in the \Teich space, unless otherwise stated. Denoting the covering map from \Teich space to the moduli space by $\Pi \from \calT_g \to \calM_g$, for a subset $\calK \subset \calM_g$ we define $\widetilde{\calK}$ to be $\Pi^{-1} (\calK)$.
\begin{definition}
    Let $\calK \subset \calM_g$ be compact. A \Teich geodesic $\calG$ is said to be $\calK$--\emph{thick} if $\calG \subset \widetilde{\calK}$.
\end{definition}

Assume $\calG$ is $\calK$--thick for some compact $\calK \subset \calM_g$ and let $X \in \calT_g$ and $\zeta \in \mf$ be arbitrary. Define
\begin{align*}
    \proj_\calG X & = \{ Y \in \calG: d(X, Y) = d(X,\calG) \}; \\
    \proj_\calG [\zeta] & = \proj_\calG \zeta = \{Y \in\calG: \ext (\zeta, Y) = \ext (\zeta,\calG) \},
\end{align*}
where $d(X, \calG) = \inf \{ d(X, Y): Y \in \calG \}$ and $\ext(\zeta, \calG) = \inf \{ \ext(\zeta, Y): Y \in \calG \}$. Both $\diam (\proj_\calG X)$ and $\diam (\proj_\calG \zeta)$ are bounded by constants depending only on $\calK$, where $\diam$ stands for the diameter of a set. 
The boundedness of $\diam (\proj_\calG X)$ is a consequence of the contraction theorem of \cite{MinskyProjToThick} and the boundedness of $\diam (\proj_\calG \zeta)$ is also standard and follows, say, from Proposition \ref{extVshaped}.

There have been many analogies between the \Teich space, equipped with the \Teich metric, and a hyperbolic space. 
More specifically, we expect the \Teich metric to behave like a $\delta$--hyperbolic (Gromov hyperbolic) metric in the thick part. (see for example \cite{MinskyProjToThick}, \cite{CurveComplex}, \cite{hyperbolicinteich}.)
The following is an instance of this phenomenon:

\begin{theorem} \label{HypInTeich}
    \textbf{(\cite{hyperbolicinteich} Theorem 8.1)}
    Let $\calK \subset \calM_g$ be compact and $X, Y, Z \in \calT_g$. Then there are constants $C$ and $D$ only depending on $\calK$ such that the following holds:
    If $U, V \in [X, Y]$ are such that $[U, V] \subset \widetilde{\calK}$ and $d(U, V) > C$, then for every $W \in [U, V]$ we have
    $$\min\{ d(W, [Z, X]), d(W, [Z, Y]) \} < D.$$ 
\end{theorem}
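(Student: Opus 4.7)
The plan is to establish this thin-triangles property along thick geodesic segments by means of the strong contraction theorem for closest-point projection onto thick geodesics, due to Minsky \cite{MinskyProjToThick}. Concretely, if $\calG$ is a $\calK$--thick geodesic, there exist constants $C_0, D_0 > 0$, depending only on $\calK$, such that $\proj_\calG$ is coarsely well-defined and strongly contracting: whenever $d(\proj_\calG A, \proj_\calG B) > C_0$, any Teichm\"uller geodesic $[A, B]$ passes within $D_0$ of both $\proj_\calG A$ and $\proj_\calG B$. This is the key hyperbolic-geometry ingredient that is available in the thick part of $\calT_g$.

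First I would set $P = \proj_{[U,V]}(Z)$. Since $[U, V] \subset [X, Y]$, the projections $\proj_{[U,V]}(X)$ and $\proj_{[U,V]}(Y)$ are within bounded distance of $U$ and $V$ respectively. Now pick an arbitrary $W \in [U, V]$ and assume, without loss of generality, that $W$ lies on the $U$-side of $P$ (the other case is symmetric, with $[Z, Y]$ in place of $[Z, X]$). The goal is to show that $W$ is within distance $D$ of the geodesic $[Z, X]$.

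The second step is to consider the projection of $[Z, X]$ onto $[U, V]$. Parametrized from $Z$ to $X$, this projected path starts coarsely at $P$ and ends coarsely at $U$, so, provided $C$ is chosen large enough that $d(U, P) > C_0 + \text{bounded constants}$, a continuity/connectedness argument produces a point $W' \in [Z, X]$ whose projection onto $[U, V]$ lies within a bounded distance $D_1$ of $W$. Applying the strong contraction property to the subgeodesic of $[Z, X]$ between $W'$ and $X$, whose projections on $[U, V]$ are far apart by assumption, forces this subgeodesic to pass within $D_0$ of $W$; in particular $d(W, [Z, X]) < D$ for a constant $D$ depending only on $\calK$.

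The main obstacle is twofold. First, one cannot simply apply contraction to $[Z, X]$ and $[Z, Y]$ directly, because these geodesics may themselves dip arbitrarily deep into the thin part; the contraction must be applied with $[U, V]$ as the thick reference geodesic, and one must extract the desired point $W'$ on $[Z, X]$ from the projection picture. Second, the constants $C$ and $D$ must be coordinated so that no matter where $W, U, V, P$ sit relative to one another, the projection of the broken path $X \to Z \to Y$ onto $[U, V]$ coarsely covers the entire segment, with $P$ dividing it into a part shadowed by $[Z, X]$ and a part shadowed by $[Z, Y]$. This is the Gromov-hyperbolic thin-triangles argument transplanted to the thick part of $\calT_g$, and the bookkeeping of constants depending only on $\calK$ is precisely what occupies the proof in \cite{hyperbolicinteich}.
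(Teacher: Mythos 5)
This theorem is not proved in the paper; it is quoted verbatim from \cite{hyperbolicinteich}, Theorem~8.1, so there is no internal proof to compare against. Judged on its own terms, your sketch takes what is, as far as I can tell, essentially the same route as Rafi's: drive everything through Minsky's strong contraction for closest-point projection onto the thick segment $[U,V]$, use coarse Lipschitz continuity of that projection along $[Z,X]$ and $[Z,Y]$ to see that those two projections coarsely cover $[U,P]$ and $[P,V]$ respectively (where $P = \proj_{[U,V]}(Z)$), and then apply contraction to a suitable sub-geodesic to force it within a bounded distance of $W$.

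Two points would need to be filled in to make this airtight. First, the argument as written applies contraction to the segment of $[Z,X]$ between $W'$ and $X$, which presupposes $d(W,U) > C_0$; you should split off the boundary cases. If $d(U,P) > C_0$, apply contraction directly to $[Z,X]$ with endpoints projecting to $U$ and $P$, so that $[Z,X]$ fellow-travels $[U,P]$ and in particular comes within $D_0$ of any $W$ between them; if $d(U,P) \le C_0$, then $d(P,V) > C - C_0$ is large and the symmetric argument on $[Z,Y]$ covers every $W$ on the $U$-side of $P$ as well, since such $W$ is within $C_0$ of $P$. Choosing $C > 2C_0 + 2D_0$, say, reconciles the cases. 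Second, the ``continuity/connectedness argument'' producing $W'$ should be explicitly grounded in the coarse Lipschitz bound for $\proj_{[U,V]}$ (which in the paper appears as Lemma~\ref{ProjNearBy}); without it, the claim that the image of $\proj_{[U,V]}\circ[Z,X]$ is coarsely onto $[U,P]$ is not automatic. With those repairs the proposal is a correct proof outline.
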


Note that the above theorem remains true if any number of the vertices of the triangle $\bigtriangleup (X, Y, Z)$ belongs to the boundary of \Teich space. The following is a consequence of Theorem \ref{HypInTeich}:

\begin{proposition} \label{GeodCloseThick}
    Let $\calK \subset \calM_g$ be compact and $\calG$ be a $\calK$--thick geodesic.
    Then there exists a constant $C = C(\calK)$ such that for every $X \in \calT_g$, $Y \in \calG$ and  $H \in \proj_\calG X$, the geodesic connecting $X$ to $Y$ passes through $B(H, C)$, the ball of radius $C$ centered at $H$.
\end{proposition}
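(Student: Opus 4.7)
The plan is to apply \thmref{HypInTeich} to the geodesic triangle with vertices $X$, $H$, $Y$, exploiting the fact that the side $[H, Y]$ is a subsegment of $\calG$ and therefore $\calK$--thick.

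First I would dispose of a trivial small-distance case: if $d(H, Y)$ is bounded by a constant depending only on $\calK$, then $Y$ itself witnesses that $[X, Y]$ enters a controlled ball around $H$. So assume in what follows that $d(H, Y)$ is large.

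Let $C_0, D$ be the constants from \thmref{HypInTeich}, applied with $(H, Y, X)$ in the roles of $(X, Y, Z)$ there. Choose $U = H$ and $V \in [H, Y]$ with $d(H, V) = d_0$ for some fixed $d_0 > \max(C_0, 2D)$ (possible since $d(H, Y)$ is assumed large). Since $[U, V] \subset [H, Y] \subset \calG \subset \widetilde{\calK}$, the hypothesis of \thmref{HypInTeich} is satisfied and its conclusion gives: every $W \in [U, V]$ lies within $D$ of either $[X, H]$ or $[X, Y]$. The key step is to rule out the first alternative once $W$ is sufficiently far from $H$. Suppose $d(W, W') \leq D$ for some $W' \in [X, H]$. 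Because $H \in \proj_\calG X$ and $W \in \calG$, we have $d(X, H) \leq d(X, W)$; combining this with the triangle inequality $d(X, W) \leq d(X, W') + D$ and the identity $d(X, W') = d(X, H) - d(H, W')$ (which holds because $W' \in [X, H]$) forces $d(H, W') \leq D$, and hence $d(H, W) \leq 2D$. Thus picking any $W \in [H, V]$ with $d(H, W) = 2D + 1$ rules out the first alternative, so $W$ must be within $D$ of $[X, Y]$, and therefore $[X, Y]$ passes within $3D + 1$ of $H$.

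The only real obstacle I anticipate is bookkeeping of constants: setting $C(\calK) := \max\bigl(3D + 1,\; \max(C_0, 2D) + 1\bigr)$ covers both the small-distance case and the main case uniformly in $X$ and $Y$. The underlying geometric content---that a geodesic from $X$ to a point far along a thick geodesic $\calG$ must enter a bounded neighborhood of the nearest-point projection of $X$---is the standard $\delta$--hyperbolic intuition, packaged here through \thmref{HypInTeich} for the thick part of \Teich space.
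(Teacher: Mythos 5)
Your proposal is correct and follows essentially the same route as the paper: apply Theorem~\ref{HypInTeich} to the triangle with vertices $X$, $H$, $Y$, pick a point on the $\calK$--thick side $[H,Y]$ at a fixed distance (your $2D+1$, the paper's $2C'+1$) from $H$, rule out proximity to $[X,H]$ using $H\in\proj_\calG X$, and conclude proximity to $[X,Y]$, yielding the constant $3D+1$. The only nit is that you should take $d_0 > \max(C_0,\,2D+1)$ rather than $\max(C_0,\,2D)$ so that a point $W$ with $d(H,W)=2D+1$ actually exists on $[H,V]$, a trivial bookkeeping fix.
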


\begin{proof}
    Let $X, H, Y$ be as in the proposition. By Theorem \ref{HypInTeich} there exists $C' = C'(\calK)$ such that for every $Z \in [H,Y]$ there is $W \in [X, H] \cup [X, Y]$ such that $d(Z, W) < C'$. We claim that $C = 3C' + 1$ statisfies the statement. 
    If $d(H, Y) < 2C'+ 1$ then we are done, otherwise 
    let $Z \in [H, Y]$ be such that $d(H, Z) = 2C' + 1$. If the point $W$ given by Theorem \ref{HypInTeich} lies in $[X, H]$, then by triangle inequality in $\bigtriangleup (W,H,Z)$ we obtain $d(W, H) > C'+ 1$, hence
    $$ d(X, Z) \leq d(X, W) + d(W, Z) < d(X, H) - 1, $$
    which contradicts the choice of $H$. 
    This contradiction implies $W \in [X, Y]$. Triangle inequality in $\bigtriangleup(H,Z,W)$ then implies $d(H, W) < 3C' + 1$.
\end{proof}

\begin{corollary} \label{DistThickGeo}
    Let $\calK \subset \calM_g$ be compact and $\calG$ be a $\calK$--thick geodesic. 
    Then there is $C = C(\calK)$ such that for every $X \in \calT_g$, $H \in \proj_\calG X$ and $Y \in \calG$, we have
    $$ d(X, Y) \simeq_C d(X, H) + d(H, Y).$$
\end{corollary}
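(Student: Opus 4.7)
The plan is to prove the two inequalities $d(X,Y) \leq d(X,H) + d(H,Y) + C$ and $d(X,Y) \geq d(X,H) + d(H,Y) - C$ separately. The first is just the triangle inequality (with no additive constant needed), so all the work is in the lower bound.

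For the lower bound, I would invoke Proposition \ref{GeodCloseThick} applied to the triple $X$, $Y$, $\calG$: it yields a constant $C_0 = C_0(\calK)$ and a point $W \in [X,Y]$ with $d(W, H) \leq C_0$. Since $W$ lies on the geodesic segment $[X,Y]$, one has the exact equality
\[
d(X, Y) = d(X, W) + d(W, Y).
\]
The triangle inequalities $d(X, W) \geq d(X, H) - d(W, H) \geq d(X, H) - C_0$ and $d(W, Y) \geq d(H, Y) - d(W, H) \geq d(H, Y) - C_0$ then combine to give
\[
d(X, Y) \geq d(X, H) + d(H, Y) - 2 C_0.
\]

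Setting $C = 2 C_0$ (which depends only on $\calK$) finishes both bounds simultaneously. There is no genuine obstacle here; the corollary is essentially a bookkeeping consequence of the nearest-point projection estimate in Proposition \ref{GeodCloseThick}, which is itself where the $\delta$-hyperbolic-like behavior of the thick part (Theorem \ref{HypInTeich}) is used. The only mild subtlety is remembering that $Y$ may be an endpoint of $\calG$ at infinity; this is fine because Proposition \ref{GeodCloseThick} is stated for $Y \in \calG$, and the same argument works verbatim if $Y$ lies on the ideal boundary as noted after Theorem \ref{HypInTeich}.
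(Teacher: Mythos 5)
Your proof is correct and uses the same key idea as the paper: invoke Proposition~\ref{GeodCloseThick} to produce a point on $[X,Y]$ within uniform distance of $H$, then use the geodesic additivity $d(X,Y) = d(X,W) + d(W,Y)$ together with the triangle inequality. The paper phrases the two resulting estimates symmetrically using the $\simeq_C$ notation (yielding constant $2C$ in the end, just as yours does), but the content is identical.
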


\begin{proof}
    Let $\calG$, $X$ and  $H$ be as above and let $C = C(\calK)$ be the constant given by Proposition \ref{GeodCloseThick}. If $[X, Y]$ intersects $B(H, C)$ at $Z$ then
    \begin{align*}
        d(X, Z) \simeq_C d(X, H) \text{    and     }
        d(Z, Y) \simeq_C d(H, Y).
    \end{align*}
    The statement follows from summing up these two estimates.
\end{proof}

Proposition \ref{GeodCloseThick} can be proved if $X \in \calT_g$ is replaced by a measured foliation $\zeta \in \mf$. The proof parallels the one given above, only instead of the triangle inequality for triangles with a vertex at infinity, we should use Kerschoff inequality. (see the discussion at the end of Section \ref{background}.) Corollary \ref{DistThickGeo} can be proved in this setting as well, hence we have the following:

\begin{proposition} \label{extVshaped}
    Let $\calK \subset \calM_g$ be compact and $\calG$ be a $\calK$--thick geodesic.
    Then there exists a constant $C = C(\calK)$ such that for every $\zeta \in \mf$, $H \in \proj_\calG \zeta$ and $Y \in \calG$, we have
    $$\beta(\zeta, Y) \simeq_C \beta(\zeta, H) + d(H, Y).$$
    Moreover, the geodesic connecting $\zeta$ to $Y$ passes through $B(H, C)$.
\end{proposition}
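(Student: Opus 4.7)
The plan is to adapt the proofs of Proposition \ref{GeodCloseThick} and Corollary \ref{DistThickGeo} essentially verbatim, replacing the ordinary triangle inequality (used at the finite vertex $X$) by Kerckhoff's inequality $\beta(\zeta, Y') \leq \beta(\zeta, X') + d(X', Y')$, which plays the role of the triangle inequality at an ideal vertex. As noted immediately after Theorem \ref{HypInTeich}, that thin-triangle statement remains valid when one vertex lies on the boundary of \Teich space, so we may apply Theorem \ref{HypInTeich} to the triangle with vertices $H$, $Y$, $\zeta$, using that $[H, Y] \subset \calG \subset \widetilde{\calK}$ is $\calK$--thick.

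For the geodesic assertion, let $C' = C'(\calK)$ be the constant from Theorem \ref{HypInTeich} applied to this triangle, so that for every $Z \in [H, Y]$ there is $W \in [\zeta, H] \cup [\zeta, Y]$ with $d(Z, W) < C'$. Assume $d(H, Y) \geq 2C' + 1$ (otherwise the statement is trivial) and pick $Z \in [H, Y]$ with $d(H, Z) = 2C' + 1$. If the resulting $W$ lies on $[\zeta, H]$, then $d(W, H) > C' + 1$ by the triangle inequality in $\bigtriangleup(W, H, Z)$. Along the geodesic ray $[H, \zeta)$, the function $\beta(\zeta, \cdot)$ decreases at unit speed (since under $g_t$ the vertical area of $\zeta$ scales as $e^{-2t}$), so $\beta(\zeta, W) < \beta(\zeta, H) - C' - 1$. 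Applying Kerckhoff's inequality to $W$ and $Z$ then gives $\beta(\zeta, Z) < \beta(\zeta, H) - 1$, which contradicts $Z \in \calG$ together with $H \in \proj_\calG \zeta$. Hence $W \in [\zeta, Y]$, and the triangle inequality in $\bigtriangleup(H, Z, W)$ yields $d(H, W) < 3C' + 1$, proving the "moreover" clause with $C = 3C' + 1$.

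For the main estimate, let $W \in [\zeta, Y] \cap B(H, C)$ be the point just produced. Because $W$ lies on $[\zeta, Y]$, the identity $\beta(\zeta, Y) = \beta(\zeta, W) + d(W, Y)$ holds exactly. The triangle inequality gives $d(W, Y) \simeq_C d(H, Y)$, while two applications of Kerckhoff's inequality (once in each direction, swapping the roles of $W$ and $H$) yield $\beta(\zeta, W) \simeq_C \beta(\zeta, H)$. Summing these estimates yields $\beta(\zeta, Y) \simeq_{2C} \beta(\zeta, H) + d(H, Y)$, as desired.

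The only substantive point requiring care is the linearity of $\beta(\zeta, \cdot)$ along the ray $[H, \zeta)$; this substitutes for the trivial finite triangle inequality that appeared in the proof of Proposition \ref{GeodCloseThick}, and is precisely what lets the closest-point argument (and hence the entire argument) go through without alteration.
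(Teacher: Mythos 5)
Your proposal is correct and follows essentially the same route that the paper sketches: adapt the proofs of Proposition~\ref{GeodCloseThick} and Corollary~\ref{DistThickGeo} to a triangle with one vertex at $\zeta$, using Theorem~\ref{HypInTeich} in its boundary-vertex form. You also correctly isolate the one additional ingredient the paper's sketch glosses over, namely the exact unit-speed decrease of $\beta(\zeta,\cdot)$ along $[H,\zeta)$, which replaces the geodesic identity $d(X,W)=d(X,H)-d(W,H)$ at the finite vertex and is needed in tandem with Kerckhoff's inequality to reach the contradiction.
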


\subsection{Busemann approximation}
We make the following definition:

\begin{definition}
    Let $\calK \subset \calM_g$ be compact. The geodesic $[X, Y]$ connecting two points $X, Y \in \T_g$ is called $\calK $--\emph{typical} if it spends at least half of its time in $\widetilde{\calK}$
\end{definition}

We say that two geodesics $\calG_1 \from [0, a] \to \calT_g$ and $\calG_2 \from [0, b] \to \calT_g$, parametrized with respect to arc length, $D$--\emph{fellow travel}, if $|a - b| < D$ and for all $0 \leq t \leq \min\{a, b\}$ we have 
$$d(\calG_1(t), \calG_2(t)) < D.$$

\begin{theorem} \label{FellowTravel}
    \textbf{(\cite{hyperbolicinteich} Theorem 7.1)}
    Let $\calK \subset \calM_g$ be compact and $C > 0$. Then there exists a constant $D = D(\calK, C)$ such that the following holds: for every $X, Y \in \widetilde{\calK}$ and $\Bar{X}, \Bar{Y} \in \calT_g$ such that $d(X, \Bar{X})$ and $d(Y, \Bar{Y})$ are both less than $C$, the geodesics $[X, Y]$ and $[\Bar{X}, \Bar{Y}]$ $D$--\emph{fellow travel}.
\end{theorem}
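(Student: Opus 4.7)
The plan is to split the fellow-traveling conclusion into a length comparison and a pointwise comparison, and to reduce the latter to Theorem~\ref{HypInTeich} applied to two auxiliary triangles. The length comparison is immediate: by the triangle inequality applied to the quadrilateral $(X, \Bar{X}, \Bar{Y}, Y)$,
$$ |d(X,Y) - d(\Bar{X}, \Bar{Y})| \leq d(X, \Bar{X}) + d(Y, \Bar{Y}) < 2C, $$
so the length condition in the definition of $D$-fellow-traveling is automatic once $D \geq 2C$.

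For the pointwise comparison, fix $W \in [X,Y]$ and look at the two triangles $\bigtriangleup(X, Y, \Bar{X})$ and $\bigtriangleup(\Bar{X}, Y, \Bar{Y})$. Since $d(X, \Bar{X}) < C$, the side $[X, \Bar{X}]$ of the first triangle is short. I would apply Theorem~\ref{HypInTeich} with $Z = \Bar{X}$ to a thick subsegment $[U, V] \subset [X, Y] \cap \widetilde{\calK}$ of length larger than $C$ containing $W$: this produces a point of $[X, \Bar{X}] \cup [\Bar{X}, Y]$ within a uniform constant of $W$. The shortness of $[X, \Bar{X}]$ either traps $W$ in a bounded neighbourhood of $X$, which is harmless because $\Bar{X}$ is near $X$ and hence $[\Bar{X}, \Bar{Y}]$ starts there, or produces a point $W_1 \in [\Bar{X}, Y]$ close to $W$. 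A second application of Theorem~\ref{HypInTeich}, now to $\bigtriangleup(\Bar{X}, Y, \Bar{Y})$ along a thick subsegment of $[\Bar{X}, Y]$ around $W_1$, moves $W_1$ within a bounded distance of a point $W' \in [\Bar{X}, \Bar{Y}]$, and combining the two estimates yields the fellow-traveling bound at $W$. A symmetric argument with the roles of $X$ and $Y$ reversed handles $W$ close to $Y$, and the parameter offset between the two geodesics is absorbed into $D$ thanks to the length comparison above.

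The main obstacle is arranging, for every $W \in [X,Y]$, a thick subsegment of the required length around $W$ inside the triangles to which Theorem~\ref{HypInTeich} is applied, since that theorem has no content when the subsegment through $W$ is not $\calK$-thick. This fails precisely when $W$ lies in a long thin excursion of $[X, Y]$. For such $W$ I would appeal to the thick-thin structure of Teichmüller geodesics: a thin excursion is driven by an annular subsurface around a short curve $\alpha$, and Kerckhoff's inequality combined with $d(X,\Bar{X}), d(Y,\Bar{Y}) < C$ forces $\alpha$ to be comparably short at $\Bar{X}$ and $\Bar{Y}$, so that $[\Bar{X}, \Bar{Y}]$ undergoes a parallel excursion about $\alpha$ with only bounded combinatorial discrepancy. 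A direct comparison in Fenchel--Nielsen or annular coordinates on each such Margulis region then shows that the two excursions fellow-travel with a uniform additive error depending only on $\calK$ and $C$. Splicing these thin-piece comparisons with the thick-piece comparisons above produces a uniform $D = D(\calK, C)$, completing the theorem.
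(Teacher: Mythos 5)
The paper gives no proof of this statement: it is quoted verbatim as a citation to \cite{hyperbolicinteich}, Theorem~7.1, so there is no in-paper argument for your attempt to be compared against. Evaluating your proposal on its own terms, there are two genuine gaps.

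First, even in the thick-part case your double application of Theorem~\ref{HypInTeich} has a circularity. After the first application to $\bigtriangleup(X, Y, \Bar{X})$ you obtain $W_1 \in [\Bar X, Y]$ near $W$, and for the second application to $\bigtriangleup(\Bar X, Y, \Bar Y)$ you would need a $\calK$--thick subsegment of $[\Bar X, Y]$ through $W_1$ of definite length. But nothing establishes that $[\Bar X, Y]$ is thick near $W_1$: the whole content of the theorem is that perturbing an endpoint in the thick part does not create new long thin excursions, so assuming thickness of the perturbed geodesic at the relevant spot is precisely what is at issue. One either has to iterate the perturbation one endpoint at a time and actually prove the thick structure is preserved, or argue differently.

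Second, and more fundamentally, the thin-part reduction is incorrect as stated. You claim Kerckhoff's inequality and $d(X,\Bar X), d(Y,\Bar Y) < C$ force the short curve $\alpha$ of a thin excursion to be ``comparably short at $\Bar X$ and $\Bar Y$.'' But $X,Y \in \widetilde{\calK}$, so $\alpha$ is \emph{not} short at the endpoints at all; it becomes short only in the interior of $[X,Y]$. Kerckhoff's inequality applied at the endpoints only tells you $\ext(\alpha,\Bar X) \asymp \ext(\alpha,X)$ and $\ext(\alpha,\Bar Y) \asymp \ext(\alpha,Y)$, i.e.\ that $\alpha$ is \emph{not} short at $\Bar X, \Bar Y$ either, which says nothing about whether $[\Bar X, \Bar Y]$ undergoes a parallel excursion. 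The correct comparison is at the level of annular subsurface projections: $d(X,\Bar X)<C$ bounds $d_\alpha(X,\Bar X)$, and a large $d_\alpha(X,Y)$ then forces a comparably large $d_\alpha(\Bar X,\Bar Y)$, whence (via Rafi's combinatorial description of Teichm\"uller geodesics) a parallel thin excursion. That subsurface-projection machinery, and the proof that matching excursion data actually forces pointwise fellow-traveling, is exactly the substance of Rafi's argument, not a ``direct comparison in Fenchel--Nielsen coordinates.'' As written, your outline punts on the part that makes the theorem nontrivial.
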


Note that the conclusion of this theorem remains valid if $X = \Bar{X}$ belongs to the boundary of the \Teich space (\cite{hyperbolicinteich} Remark 7.2). In that case, we should allow $a = b = \infty$ in the definition of fellow traveling.

\begin{remark} \label{enlargecompact}
    It is a consequence of this theorem that for every compact set $\calK \subset \calM_g$ and real number $C > 0$, there exists an enlargement $\calK' \supset \calK$, depending only $\calK$ and $C$, such that if $X, Y, \Bar{X}, \Bar{Y}$ are as in the theorem and $[X, Y]$ is $\calK$--typical, then $[\Bar{X}, \Bar{Y}]$ is $\calK'$--typical. 
\end{remark}

Generically, two geodesic rays going to the same point in the boundary of \Teich space become exponentially close to each other. The precise statement is as follows:

\begin{theorem} \label{ExpConvRays}
    \textbf{(\cite{EMR} Corollary ??)}
    Let $\calK \subset \calM_g$ be compact and $C>0$. Then there are positive numbers $\alpha = \alpha(\calK)$ and $D = D(\calK, C)$ such that the following holds:
    If $X, Y \in \widetilde{\calK}$ and $\zeta \in \mf$ are such that $\ext(\zeta, X) = \ext(\zeta, Y)$; $d(X, Y) < C$; and $Z_1 \in [X, \zeta)$ and $Z_2 \in [Y, \zeta)$ are such that $d(X, Z_1) = d(Y, Z_2) = T$, then 
    $$d(Z_1, Z_2) < D e^{-\alpha T}.$$
\end{theorem}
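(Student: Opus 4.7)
The plan is to realize $[X,\zeta)$ and $[Y,\zeta)$ as Teichmüller-flow orbits of two quadratic differentials lying on a common strong stable leaf, and then invoke uniform exponential contraction along strong stable leaves over the thick part of moduli space. Let $\phi_1 \in Q(X)$ and $\phi_2 \in Q(Y)$ be the unique holomorphic quadratic differentials with vertical foliation $\zeta$. The hypothesis $\ext(\zeta,X) = \ext(\zeta,Y)$ reads $|\phi_1| = |\phi_2|$, so after rescaling to unit norm both live in the same energy level of $\qt_g$ and sit on the same strong stable leaf of the Teichmüller flow, namely the leaf of unit-norm quadratic differentials with fixed vertical measured foliation. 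The two rays are then $Z_i(T) = \pi(g_T\phi_i)$.

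Next, apply Theorem \ref{FellowTravel} with far endpoint $\zeta$ at infinity: $[X,\zeta)$ and $[Y,\zeta)$ $D_0$-fellow travel for some $D_0 = D_0(\calK,C)$, so $d(Z_1(t),Z_2(t)) \leq D_0$ for all $t\ge 0$. By Remark \ref{enlargecompact}, both rays remain over an enlarged compact set $\calK_0 \subset \calM_g$ during every interval in which $[X,\zeta)$ lies over $\calK$. On a strong stable leaf, period coordinates identify the leaf with a linear space of horizontal measured foliations, and the derivative $Dg_t$ multiplies these horizontal measures by $e^{-t}$. Over the compact bundle $\pi^{-1}(\calK_0) \subset \qt_g$, the Teichmüller metric is uniformly bi-Lipschitz equivalent to this horizontal-foliation parameter on the leaf, with constants depending only on $\calK_0$. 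Hence while both $g_s\phi_i$ project into $\calK_0$, one obtains exponential contraction $d(Z_1(t),Z_2(t)) \leq C_1 e^{-\alpha t} d(X,Y)$ with $\alpha = \alpha(\calK) > 0$.

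To handle excursions of the rays into the thin part, one uses that the two rays share both the endpoint $\zeta$ at infinity and a synchronized Busemann parametrization: the hypothesis $\ext(\zeta,X) = \ext(\zeta,Y)$ equates $\beta(\zeta,X)$ and $\beta(\zeta,Y)$, and along each ray the extremal length then evolves as $e^{-2t}$ times the common initial value. Combined with the uniform fellow-travelling bound $D_0$, this prevents $d(Z_1(t),Z_2(t))$ from growing even when the rays are in the thin part: two rays with a common point at infinity and matched Busemann clocks can only approach each other. The exponential contraction picked up on the initial thick segment therefore persists for all later $T$, yielding the desired estimate with $D = D(\calK,C)$.

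The main obstacle is making rigorous the bi-Lipschitz comparison between the Teichmüller metric and the horizontal-foliation coordinate on strong stable leaves over $\calK_0$; this uses period coordinates together with a compactness argument on the bundle of quadratic differentials over $\calK_0$, and is where the dependence of $\alpha$ on $\calK$ alone is extracted. The subtler point is the non-expansion in the thin part: one must show that once contraction has been achieved, subsequent thin-part excursions cannot undo it. Here one exploits that $g_t\phi_1$ and $g_t\phi_2$ differ only in the horizontal direction, which the flow strictly contracts, so any residual distance between $Z_1(t)$ and $Z_2(t)$ can only decrease (up to bounded error coming from passing between the Teichmüller and horizontal-foliation metrics).
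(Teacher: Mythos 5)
The paper does not prove Theorem \ref{ExpConvRays}: it is quoted as a black-box result attributed to Eskin--Mirzakhani--Rafi (with the corollary number left as a placeholder), and no argument is given in the text. So there is nothing in the paper to compare your proof against; what I can do is assess your sketch on its own terms.

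Your overall strategy --- realize the two rays as $g_t$-orbits of quadratic differentials $\phi_1,\phi_2$ with the same vertical foliation $\zeta$ and the same norm, invoke uniform hyperbolicity of the Teichm\"uller geodesic flow so that the strong stable leaf is uniformly contracted over $\pi^{-1}(\calK_0)$, and compare the period-coordinate metric on the leaf to the Teichm\"uller metric in the thick part --- is the right circle of ideas and matches how such estimates are established in the literature. However, there is a genuine gap precisely at the step you flag as the ``subtler point,'' and it is not a technicality you can wave away. The assertion that ``two rays with a common point at infinity and matched Busemann clocks can only approach each other'' has no justification: distance along a pair of Teichm\"uller geodesics is not a convex function of time, and it is not true in general that $d(Z_1(t),Z_2(t))$ is monotone. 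Masur's and Lenzhen's examples show geodesics with the same vertical foliation can fluctuate in distance, and in the absence of unique ergodicity of $\zeta$ they need not converge at all. The uniform contraction argument gives you decay of the \emph{period-coordinate} distance on the stable leaf, but the bi-Lipschitz comparison with $d$ degrades without bound in the thin part, and your ``up to bounded error'' at the end of the sketch is exactly the place where such errors can accumulate across repeated thin-part excursions. To close the gap one needs a quantitative statement ensuring either that the rays spend a definite proportion of time in a fixed thick part (so the exponential gains in the thick part dominate any thin-part losses), or a direct comparison of $d(Z_1(T),Z_2(T))$ to extremal lengths controlled by the shared vertical and converging horizontal data at time $T$. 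As written, the claimed monotonicity does not follow from the hypotheses $X,Y\in\widetilde{\calK}$ and $\ext(\zeta,X)=\ext(\zeta,Y)$, which only constrain behavior at $t=0$, so the estimate for large $T$ is not established.
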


We also need the following (\cite{MinskyProjToThick} Corollary 4.1):
\begin{lemma} \label{ProjNearBy}
    Let $\calK \subset \calM_g$ be compact and $\calG$ be a $\calK$--thick geodesic. Then there exists a constant $C = C(\calK)$ such that for every $X, Y \in \calT_g$ we have
    $$ \diam (\proj_\calG (X) \cup \proj_\calG(Y) ) < 
    d(X, Y) + C.$$
\end{lemma}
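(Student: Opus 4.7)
The plan is to exploit the coarse V--shape established in \corref{DistThickGeo}: because $\calG$ is $\calK$--thick, a geodesic from any $X \in \calT_g$ to any point on $\calG$ passes near the closest--point projection of $X$, and the two legs of the resulting path are additive up to an error constant $C_0 = C_0(\calK)$. Combined with the minimality that defines the projection, this should force the projections of $X$ and $Y$ onto $\calG$ to lie within $d(X,Y)$ of each other, up to a universal additive error.

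Concretely, fix $H \in \proj_\calG X$ and $H' \in \proj_\calG Y$. I first apply \corref{DistThickGeo} with the base point $X$ (whose projection to $\calG$ is $H$) and the target $H' \in \calG$, and then with the base point $Y$ (whose projection is $H'$) and the target $H \in \calG$, to obtain
\begin{align*}
    d(X, H') &\simeq_{C_0} d(X, H) + d(H, H'),\\
    d(Y, H) &\simeq_{C_0} d(Y, H') + d(H, H').
\end{align*}
The defining minimality of the projection gives $d(X, H) \le d(X, H')$ and $d(Y, H') \le d(Y, H)$, so the two displayed estimates yield
\[
    d(H, H') \le d(X, H') - d(X, H) + C_0, \qquad d(H, H') \le d(Y, H) - d(Y, H') + C_0.
\]
Adding these and applying the triangle inequality $d(X, H') \le d(X, Y) + d(Y, H')$ and $d(Y, H) \le d(X, Y) + d(X, H)$, the unwanted projection distances cancel and I conclude $2\, d(H, H') \le 2\, d(X, Y) + 2 C_0$, that is, $d(H, H') \le d(X, Y) + C_0$.

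To finish, I recall from the discussion following the definition of $\proj_\calG$ that the individual diameters $\diam(\proj_\calG X)$ and $\diam(\proj_\calG Y)$ are bounded by a constant $C_1 = C_1(\calK)$ (Minsky's contraction theorem). Hence any two points of $\proj_\calG X \cup \proj_\calG Y$ are within $d(X, Y) + C_0 + 2 C_1$ of each other, and setting $C = C_0 + 2 C_1$ proves the claim. I expect the only delicate point to be checking that \corref{DistThickGeo} is applicable in both orderings of $X, Y$ with targets $H, H'$ that may themselves lie near the ends of $\calG$; but since $H, H' \in \calG \subset \widetilde{\calK}$ and the corollary is stated for any target on the thick geodesic, no additional care is needed and the argument closes cleanly.
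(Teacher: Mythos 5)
Your argument is correct. You apply \corref{DistThickGeo} twice, once from $X$ toward $H' \in \proj_\calG Y$ (seeing $H$ on the way) and once from $Y$ toward $H \in \proj_\calG X$ (seeing $H'$ on the way), isolate $d(H,H')$ from each estimate, add, and the triangle inequality makes the cross terms collapse to $2\,d(X,Y)$; the bounded-diameter fact for individual projections then absorbs the ambiguity in the choice of $H, H'$. Each step, including the two invocations of the corollary with targets $H, H' \in \calG \subset \widetilde{\calK}$, is legitimate, and the constant you produce depends only on $\calK$ as required. (As a minor remark, the minimality inequalities $d(X,H) \le d(X,H')$ and $d(Y,H') \le d(Y,H)$ are never actually used; the two one-sided bounds on $d(H,H')$ follow directly from $\simeq_{C_0}$.)

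This is a genuinely different route from the paper, which does not give a proof at all: it simply cites \cite{MinskyProjToThick}, Corollary~4.1, for the coarse Lipschitz property of the closest-point projection to a thick geodesic. Your derivation instead builds the statement from \corref{DistThickGeo}, which the paper has already established via \propref{GeodCloseThick} and the thin-triangles theorem of \cite{hyperbolicinteich}. The benefit of your approach is a self-contained exposition within the paper's own toolkit; the cost is essentially nil, since \corref{DistThickGeo} was already available. The paper's choice to cite Minsky directly is standard, as that is the original source of the contraction property being used.
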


The next proposition is the main tool that we use to relate the extremal and \Teich lengths:


\begin{proposition} \label{CompTeichExt}
    Let $\calK \subset \calM_g$ be compact and $\calG$ be a $\calK$--thick geodesic. Then for every $\epsilon > 0$ there exists $C = C(\calK, \epsilon)$ such that the following holds:
    if $\zeta \in \mf$; $X, Y \in \calG$; $Z \in [X, \zeta)\cap \widetilde{\calK}$ and $H \in \proj (Z, \calG)$ are such that the goedesic $[Z, H]$ is $\calK$--typical and of length greater than $C$, then 
    $$d(Z, Y) - d(Z, X) \simeq_\epsilon 
    \beta(\zeta, Y) - \beta( \zeta, X) = \beta( [\zeta], X, Y).$$
\end{proposition}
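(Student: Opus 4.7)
The plan is to apply Theorem \ref{ExpConvRays} to the rays $[X, \zeta)$ and $[Y, \zeta)$, which share the endpoint $\zeta$. For the given $Z \in [X, \zeta)$ I would produce a matching point $Z' \in [Y, \zeta)$ with $\ext(\zeta, Z') = \ext(\zeta, Z)$ and $d(Z, Z') < \epsilon$; the conclusion then follows from the linear decay of the Busemann function along $[Y, \zeta)$. Explicitly, that decay gives $d(Y, Z') = \beta(\zeta, Y) - \beta(\zeta, Z')$, and combining with $\beta(\zeta, Z') = \beta(\zeta, Z) = \beta(\zeta, X) - d(X, Z)$ yields $d(Z', Y) - d(Z, X) = \beta([\zeta], X, Y)$ exactly, while $|d(Z, Y) - d(Z', Y)| \leq d(Z, Z') < \epsilon$ by the triangle inequality.

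To set up Theorem \ref{ExpConvRays}, I first fix $H' \in \proj_\calG \zeta$. By Proposition \ref{extVshaped} both rays $[X, \zeta)$ and $[Y, \zeta)$ pass through $B(H', C_0)$ for some $C_0 = C_0(\calK)$, yielding intersection points $X_* \in [X, \zeta)$ and $Y_* \in [Y, \zeta)$. After sliding one of them further along its ray by a bounded amount (to equate extremal lengths, which already agree up to a multiplicative constant depending only on $C_0$ by Kerckhoff's inequality), I may assume $\ext(\zeta, X_*) = \ext(\zeta, Y_*)$. The resulting $X_*, Y_*$ lie in an enlarged thick region $\widetilde{\calK'}$, with $d(X_*, Y_*) \leq C_1 = C_1(\calK)$, and Theorem \ref{ExpConvRays} provides constants $\alpha, D_0$ such that for $Z$ at distance $T' = d(X_*, Z)$ on $[X_*, \zeta)$ and $Z'$ at the same distance $T'$ on $[Y_*, \zeta)$, one has $d(Z, Z') < D_0 e^{-\alpha T'}$ and $\ext(\zeta, Z) = \ext(\zeta, Z')$.

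The remaining task is to show that the $Z$ of the hypothesis really lies on the sub-ray $[X_*, \zeta)$ (rather than the initial segment $[X, X_*]$) and to lower-bound $T'$ by the given $d(Z, H) > C$. For the former, the segment $[X, X_*]$ has both endpoints within $C_0$ of $\calG$, so by Theorem \ref{FellowTravel} it fellow-travels with a subsegment of $\calG$ and therefore stays inside a $D = D(\calK, C_0)$-neighborhood of $\calG$; hence $d(Z, H) > C > D$ forces $Z$ past $X_*$ on the ray. The triangle inequality $d(Z, X_*) \geq d(Z, \calG) - d(X_*, \calG) \geq C - C_0$ then gives $T' > C - C_0$, so choosing $C = C(\calK, \epsilon)$ large enough that $C - C_0 > \tfrac{1}{\alpha} \log(D_0 / \epsilon)$ produces $d(Z, Z') < \epsilon$, and the conclusion follows.

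I expect the main obstacle to be the technical bookkeeping around the enlargement $\calK \to \calK'$: the successive perturbations (fellow-traveling to confine $[X, X_*]$ near $\calG$, and sliding $X_*$ or $Y_*$ to equalize extremal lengths) must combine into a single enlargement whose exponential-convergence constants $\alpha, D_0$ depend only on $\calK$ and not on $X, Y, \zeta$. This is presumably where the hypothesis that $[Z, H]$ be $\calK$-typical is used, via Remark \ref{enlargecompact}, to ensure Theorem \ref{FellowTravel} can be invoked consistently without the thick-part data blowing up.
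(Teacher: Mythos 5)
Your proposal is correct and takes essentially the same route as the paper: project $\zeta$ to $H_\zeta \in \proj_\calG\zeta$, use Proposition~\ref{extVshaped} to place points $X_*, Y_*$ on both rays near $H_\zeta$, slide one of them to equalize Busemann values, apply Theorem~\ref{ExpConvRays}, and compute. You are in fact slightly more explicit than the paper on one detail it glosses over, namely verifying (via fellow-traveling of $[X, X_*]$ with a subsegment of $\calG$) that $Z$ lies past $X_*$ on the ray and that $T' = d(X_*, Z)$ is therefore bounded below by $d(Z,\calG) - C_0$.
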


\begin{proof}

    Let $H_\zeta \in \proj_\calG \zeta$ and $C = C(\calK)$ be the constant given by Proposition \ref{extVshaped}, so there is $X' \in (\zeta, X]$ such that $d(X', H_\zeta) < C$. 
    By Theorem \ref{FellowTravel} there is a constant $D$ depending on $C$ and $\calK$ such that $(\zeta, X']$ and  $(\zeta, H_\zeta]$ $D$--fellow travel; as a result, we can find $Z_\zeta \in (\zeta, H_\zeta$] with $d(Z, Z_\zeta) < D$. 
    Since $H_\zeta \in \proj_\calG Z_\zeta$, by Lemma \ref{ProjNearBy}, there exists $D'$ depending on $D$ and $\calK$ such that $d(H, H_\zeta) < D'$.
    Triangle inequality then implies $d(X', H) < D' + C$, hence $[Z, H]$ and $[Z, X']$ fellow travel, so $[Z, X']$ is $\calK'$--typical for some enlargement $\calK'$ of $\calK$ (Remark \ref{enlargecompact}) and since the constants $C, D, D'$ depend on $\calK$, the enlargement $\calK'$ only depends on $\calK$ as well.
    Applying Proposition \ref{extVshaped} once again gives $Y' \in (\zeta, Y]$ with $d(Y', H_\zeta) < C$, hence $d(X', Y') < 2C$.
    Using \kerch inequality we get
    $$|\beta(\zeta, X', Y')| \leq d(X', Y') < 2C,$$
    so by moving $Y'$ along $(\zeta, Y]$ by at most $2C$ we may obtain $Y''$ such that $\beta (\zeta, X') = \beta(\zeta, Y'')$.
 %
 %
%
    Let $Z'$ be the point obtained by flowing $Y''$ along $[Y'', \zeta)$ by time $T = d(X', Z)$. 
    By Theorem \ref{ExpConvRays} there exists $T_0 = T_0(\calK', C, \epsilon)$ such that if $T > T_0$ we have  $d(Z, Z') < \epsilon$. Hence if 
    $d(Z, H) > T_0 + D' + C$, we have
    \begin{align*}
        d(Z, Y) - d(Z, X) & \simeq_\epsilon d(Z', Y) - d(Z, X) = d(Y'', Y) - d(X', X) \\
        & = (\beta(\zeta, Y) - \beta(\zeta, Y'')) - (\beta(\zeta, X) - \beta(\zeta, X')) \\
        & = \beta(\zeta, Y) - \beta(\zeta, X).
    \end{align*}

\end{proof}

\section{Proofs} \label{MainTheorem}

\subsection{Preliminary discussion} \label{PrelimMainThm}
Fix a \pa{} homeomorphism $\gamma$ throughout this section  and let $\calL_\gamma$ be the axis of its action on \Teich space and $L = \tau(\gamma)$ its translation length.
Define $\calC_\gamma$ and $\Bar{\calL}_\gamma$ as in the introduction and denote the covering map from $\calT_g$ to $\calC_\gamma$ by $\pigamma$. 
To lighten the notation, we denote $\calL_\gamma$ and $\bar{\calL}_\gamma$ by $\calL$ and $\bar{\calL}$ respectively. 
Recall the following definitions:
\begin{align*}
    B(X, R) & = \{ Y \in \calT_g: d(X, Y) \leq R \};\\
    B(\calL, R) & = \{ Y \in \calT_g : d(Y, \calL) \leq R \};\\
    B_{\Ext}(\calL) & = \{ \zeta \in \mf: \Ext(\zeta, \calL) \leq 1 \};
\end{align*}
and define the following:
\begin{align*}
    \typ(\calL, \calK)  & = \{ Y \in \T_g : [Y, H] 
    \text{ is $\calK$--typical}
    \text{ for some } 
    H \in \proj_{\calL} Y \};\\
    \typ(X, \calK) & =  \{ Y \in \T_g : [X, Y] \text{ is $\calK$--typical} \}. 
\end{align*}
Given an open subset $\calU$ of $\pmf$, let
\begin{align*}
    S (X, \calU, R) & = \{ \pi (g_t \phi):  \phi \in Q(X),  [\calV(\phi)] \in \calU \text{ and } 0 \leq t\leq R \};\\
    S_{\Ext}(X, \calU) & = \{\zeta \in \mf : [\zeta] \in \calU  \text{ and } \ext(\zeta, X) \leq 1 \}.
\end{align*}

Let $X, P \in \calT_g$ and assume $\calU$ is an open subset of $\pmf$ . we need the following two facts from \cite{abem}:
\begin{itemize}
    \item As $R \rightarrow \infty$,
    \begin{align} \label{AbemForSector} 
        |\Gamma \cdot P \cap S(X, \calU, R) |   \sim \cte \nu (S_{\Ext} (X, \calU)) e^{hR}.
    \end{align}
    \item For every $\epsilon > 0$, there exists a compact set $\calK \subset \calM_g$ depending on $X, P, \epsilon$ such that
    \begin{align} \label{AbemSectorTypical}
    \limsup_{R \rightarrow \infty} e^{-hR} |\Gamma \cdot P \cap S(X, \calU, R) \setminus \typ(X, \calK)| < \epsilon.
    \end{align}
\end{itemize}

The first fact follows from \cite{abem} Theorem 2.9, since, with the notation of Proposition 2.1 of the same paper, we have 
\begin{align*}
    \int_\calU \lambda^-(q) ds_X (q) = 
    \int_\calU \frac{d(\delta^+_X)_* \Bar{\nu}}{ds_X} ds_X = (\delta_X^+)_* \Bar{\nu} (\calU) = 
    \nu \big( S_{\Ext} (X, \calU) \big),
\end{align*}
where the first equality uses part (i) of the same proposition.

The second fact follows from \cite{abem} Theorem 2.7 since (again, with the paper's notation) if $K' \subset \calP(1, ..., 1) \subset \qt^1_g$ is a compact subset of the principal domain of quadratic differentials with norm $1$, then $\calK = \pi (K')$ is compact as well.

Let $[\gamma^\pm]$ be the set containing the two elements of $\pmf$ that are fixed by $\gamma$. By Theorem 6.9 of \cite{Papadopoulos}, the action of $\gammacyl$ on 
$\pmf \setminus [\gamma^\pm]$ is a covering space action.
Define 
$$\calC_{\ext, \gamma} = \gammacyl \backslash B_{\ext} (\calL) \gap \text{ and let } \gap \Pi_{\ext, \gamma} \from B_{\ext} (\calL) \to \calC_{\ext, \gamma}$$
be the corresponding covering map.
Since $ \Gamma \cdot [P] \cap B(\bar{\calL}, R)$ is in one-to-one correspondence with 
$\gammacyl \backslash \big( \Gamma \cdot P \cap B(\calL, R) \big)$, Theorem \ref{Cylresult} is equivalent to the following:

\begin{theorem} \label{CylCount}
    Let $\gamma \in \mcg$ be pseudo-Anosov and $\calL = \calL_\gamma$ its axis. Then for a given  $P \in \calT_g$ we have 
    $$ | \gammacyl \backslash \big(\Gamma \cdot P \cap B(\calL, R)\big) | \sim \cte
    \nu (\cextgamm)
    e^{hR},$$
    as $R \rightarrow \infty$.
\end{theorem}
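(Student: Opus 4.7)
The plan is to implement the sector approximation scheme outlined in Section \ref{ProofOutline}. Fix $O \in \calL$ and a $\gamma$-invariant $\epsilon$-net $(X_i)_{i \in \ZZ}$ of $\calL$ with $X_{i+N} = \gamma X_i$. Define the closest-point partition $\calA_i = \calP^{-1}(i) \subset \pmf$ with respect to extremal length at $X_i$, which is automatically $\gamma$-equivariant in the sense that $\gamma \calA_i = \calA_{i+N}$. The first preliminary step is to sandwich each $\calA_i$ between open sets $\calU_i^\delta \subset \calA_i \subset \calV_i^\delta$ with $\calU_i^\delta \uparrow \calA_i$ and $\calV_i^\delta \downarrow \calA_i$ as $\delta \downarrow 0$. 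Since $\partial \calA_i$ consists of foliations equidistant in extremal length from $X_i$ and some other $X_j$, hence projects to the image of $E(X_i, X_j)$ in $\pmf$, Proposition \ref{measure zero} gives $\nu(\partial \calA_i) = 0$, guaranteeing that the inside and outside approximations have matching Thurston measure in the limit.

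The core geometric claim is that the sectors $S(X_i, \calA_i, R)$, for $i = 0, \dots, N-1$, are almost disjoint and almost cover $\gammacyl \backslash \bigl( \Gamma \cdot P \cap B(\calL, R) \bigr)$, and this is where Proposition \ref{CompTeichExt} does the work. For the cover direction, let $Y \in B(\calL, R)$ with projection $H \in \proj_\calL Y$, and restrict attention via \eqref{AbemSectorTypical}, after choosing a large compact $\calK$, to $Y$ with $[Y, H]$ being $\calK$-typical. Applying Proposition \ref{CompTeichExt} with $Z = Y$ converts the ``closest $X_j$ to $Y$'' condition, measured in \Teich distance, into the extremal-length condition $[\calV(\phi_Y)] \in \calA_j$, where $\phi_Y \in Q(X_j)$ is the initial quadratic differential of $[X_j, Y]$. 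Hence $Y$ lies in $S(X_j, \calV_j^\delta, R + O(\epsilon))$ for a $j$ with $X_j$ closest to $H$. The same proposition, run in the other direction, shows that if $Y \in S(X_i, \calU_i^\delta, R)$ is typical then $X_i$ is strictly closer to $Y$ than any $X_k$ with $k \neq i$ up to $O(\epsilon)$, yielding almost-disjointness.

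Applying \eqref{AbemForSector} to each sector and summing over one fundamental domain $i = 0, \dots, N-1$ produces the two-sided estimate
\begin{align*}
\cte \, e^{hR} \sum_{i=0}^{N-1} \nu(S_{\ext}(X_i, \calU_i^\delta)) \, \lesssim \, \bigl|\gammacyl \backslash \bigl(\Gamma \cdot P \cap B(\calL, R)\bigr)\bigr| \, \lesssim \, \cte \, e^{hR} \sum_{i=0}^{N-1} \nu(S_{\ext}(X_i, \calV_i^\delta)),
\end{align*}
valid in the limit $R \to \infty$, with an additive $\eta e^{hR}$ error absorbing the nontypical contribution from \eqref{AbemSectorTypical}. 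Letting $\delta \to 0$ and using $\nu(\partial \calA_i) = 0$ collapses both sums to $\sum_{i=0}^{N-1} \nu(S_{\ext}(X_i, \calA_i))$. Then letting $\epsilon \to 0$, continuity of extremal length in the base point gives $\ext(\zeta, X_i) \to \ext(\zeta, \calL)$ for the net-closest $X_i$, so that $\bigsqcup_{i=0}^{N-1} S_{\ext}(X_i, \calA_i)$ becomes, up to a set of measure zero, a fundamental domain for the $\gammacyl$-action on $B_{\ext}(\calL)$, with total measure $\nu(\cextgamm)$. Sending $\eta \to 0$ concludes the proof.

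The main obstacle is the careful coordination of the limits in $R$, $\delta$, $\epsilon$ and of the choice of compact set $\calK$. The constants in Proposition \ref{CompTeichExt} depend on both $\calK$ and the accuracy $\epsilon$; \eqref{AbemSectorTypical} requires $\calK$ to be chosen in terms of the tolerance $\eta$; and the measure-theoretic convergence above requires that the approximation errors vanish uniformly in $i$. One must therefore fix $\eta$ first, then $\calK$, then take $R$ large enough for \eqref{AbemForSector} and \eqref{AbemSectorTypical} to apply uniformly on a single fundamental domain, and only afterwards send $\delta, \epsilon \to 0$ and finally $\eta \to 0$. Verifying that all error terms compose acceptably through this tower of limits is the main bookkeeping task.
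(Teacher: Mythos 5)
Your proposal follows the paper's proof essentially verbatim: it uses the same $\gamma$-invariant $\epsilon$-net, the same closest-point partition $\{\calA_i\}$ squeezed by open approximations $\calU_i^\delta \subset \calA_i \subset \calV_i^\delta$, the same appeal to Proposition~\ref{measure zero} for $\nu(\partial \calA_i) = 0$, the same use of Proposition~\ref{CompTeichExt} to translate ``Teich-closest $X_j$'' into ``extremal-length-closest $X_j$'' for typical points (yielding both the cover and the almost-disjointness, as in Lemmas~\ref{SectorsCover} and~\ref{SectorsRDisjoint}), and the same nesting of limits $R \to \infty$, $\calK \uparrow \calM_g$, $\delta \downarrow 0$, $\epsilon \downarrow 0$ with the nontypical contribution controlled by~\eqref{AbemSectorTypical} (Lemma~\ref{CylMostlyTyp}). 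The identification $\bigsqcup_i S_{\ext}(X_i, \calA_i) \to \calC_{\ext,\gamma}$ as a measure-theoretic fundamental domain as $\epsilon \to 0$ is exactly Lemma~\ref{extconesqueeze}.
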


Theorem \ref{CylCount} is proved in Section \ref{proofofA}. Assuming this theorem, we now give a proof of Theorem \ref{VolumeResult}.

\begin{proof}[Proof of Theorem \ref{VolumeResult}]

    Fix a point $O \in \calL$ and let $Y \in \calT_g$ be arbitrary. 
    Pick a point $H_Y \in \proj_\calL(Y)$ and choose $k = k(Y) \in \ZZ$ in such a way that $d(O, \gamma^k H_Y) < L$. Define
    $$ h \from \gammacyl \backslash \big( \Gamma \cdot X \cap B(\calL, R) \big) \to \Gamma \cdot  X \cap B(O, R+L)$$
    by sending $\gammacyl.(gX)$ to $\gamma^{k{(gX)}} (gX)$
    (there might be more than one option for $H_{gX}$ and 
    $k{(gX)}$, then choose one.) Note that $h$ is an injection, hence
    \begin{equation} \label{ThmBineq1}
        |\gammacyl \backslash \big( \Gamma \cdot X \cap B(\calL, R) \big) | \leq 
    |\Gamma \cdot  X \cap B(O, R+L)|.
    \end{equation}

     By \cite{abem} Theorem 5.1, there exists a constant $C $, only depending on the base point $O$, such that for all $X \in \calT_g$ and $R>0$,
     \begin{align*} 
        |\Gamma \cdot  X \cap B(O, R)| < C e^{hR}.
     \end{align*}
    This, combined with (\ref{ThmBineq1}), implies that for $C' = C e^{hL}$ and every $X \in \calT_g$ we have 
    \begin{align} \label{ThmBineq2}
        |\gammacyl \backslash \big( \Gamma \cdot X \cap B(\calL, R) \big) | < C' e^{hR}.
    \end{align}
    
    Define the covering map
    $$ \Pi_{\gamma, \Gamma} \from \calC_\gamma \to \calM_g \gap \text{ by } \gap \gammacyl.X \mapsto \Gamma \cdot X.$$
    Since $\Pi_\gamma$ is a local diffeomorphism, we have
    \begin{align*} 
        \Vol (B(\bar{\calL}, R)) & = 
        \int_{\calM_g} |\Pi_{\gamma, \Gamma}^{-1}(X) \cap B(\bar{\calL}, R) | d\Vol(X) \\
        & = 
        \int_{\calM_g} |\gammacyl \backslash \big( \Gamma \cdot X \cap B(\calL, R) \big) | d\Vol(X).
    \end{align*}
    We multiply the left and right-hand side of this equation by $e^{-hR}$ and take the limit as $R \rightarrow \infty$. Since $C'$ in (\ref{ThmBineq2}) does not depend on $X$, we can apply Lebesgue's dominated convergence theorem to take the limit inside the integral. Theorem \ref{CylCount} then concludes the proof.
\end{proof}

\subsection{Concluding the proof} \label{proofofA}

The goal of this section is to give a proof of Theorem \ref{CylCount}.
Fix two points $P \in \calT_g$ and $O \in \calL$ for the rest of this section. 
To lighten the notation, we do not show the dependance of constants on $\calL, O, P$. So, for example, we write $C = C(\calK)$ instead of $C = C(\calK, \calL, O, P)$.
For $n, m \in \ZZ \cup \{ \pm \infty \}$, a sequence of points $\calX = (X_i)_{n \leq i \leq m} \subset \calL$ is called a \emph{net} if $(X_i, X_{i + 1})$'s are disjoint for $n \leq i < m$.
For such a net $\calX$, $\sup_{n \leq i < m} d(X_i, X_{i + 1})$ is called the \emph{mesh} of $\calX$ and for an $\epsilon > 0$, a net with mesh less than $\epsilon$ is called an $\epsilon$--\emph{net}.

Let $\epsilon > 0$ and take an $\epsilon$--net $O = X_0, ..., X_N = \gamma O$ in $[O, \gamma O]$.
Letting $X_{i + N} = \gamma X_i$, we obtain 
a $\gamma$--invariant $\epsilon$--net $\calX = (..., X_{-1}, X_0, X_1, ...)$ in $\calL$, and every $\gamma$--invariant $\epsilon$--net containing $O$ is obtained in this way. 
From now on, by an $\epsilon$--net $\calX = (X_i)_{i \in \ZZ}$ we always mean a $\gamma$--invariant $\epsilon$--net in $\calL$ with $X_0 = O$, and we show the number $N$ such that $X_N = \gamma O$ by $N = N(\calX)$. For such an $\epsilon$--net $\calX$, and for all $i \in \ZZ$, define 
\begin{align*}
    \calA_i(\calX)  = \{ \zeta \in \MF : \Ext(\zeta, X_i) \leq 1 \text{ and } \beta (\zeta,  X_i) = \inf_{j \in \ZZ} \beta (\zeta,  X_j) \}.
\end{align*}
\begin{lemma} \label{extconesqueeze}
    Let $\calX$ be an $\epsilon$--net, then
    $$\frac{1}{e^{\epsilon(6g - 6) }} \nu ( \cextgamm) \leq 
     \sum_{i = 0}^{N(\calX)-1} \nu(\calA_i(\calX)) 
     \leq \nu ( \calC_{\ext, \gamma}).$$
\end{lemma}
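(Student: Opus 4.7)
The plan is to view $\calA_0, \ldots, \calA_{N-1}$ as a measure-theoretic fundamental domain for the $\gammacyl$-action on a slightly scaled copy of $B_{\ext}(\calL)$. The starting observation is the equivariance $\gamma \calA_i = \calA_{i+N}$, immediate from the mapping class group invariance $\ext(\gamma\zeta,\gamma X) = \ext(\zeta,X)$ together with $X_{i+N} = \gamma X_i$. Next, if $\zeta \in \calA_i \cap \calA_j$ with $0 \le i < j < N$, then $\beta(\zeta, X_i) = \beta(\zeta, X_j)$, so $\zeta \in E(X_i, X_j)$, which is $\nu$--null by Proposition \ref{measure zero}; combined with the equivariance, the same argument handles overlaps between $\calA_i$ and $\gamma^k \calA_j = \calA_{j+kN}$ for $k \ne 0$, since one lands in $E(X_i, X_{j+kN})$. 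Hence $\Pi_{\ext,\gamma}$ is injective on $\bigsqcup_{i=0}^{N-1} \calA_i$ outside a $\nu$--null set and
$$\sum_{i=0}^{N-1} \nu(\calA_i) = \nu\bigl(\Pi_{\ext,\gamma}(\bigsqcup_{i=0}^{N-1}\calA_i)\bigr).$$

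The upper bound is then essentially free: each $\calA_i$ is contained in $B_{\ext}(\calL)$, because the defining condition $\ext(\zeta, X_i) \le 1$ already forces $\inf_{X\in\calL}\ext(\zeta,X) \le 1$. So the projected set lies in $\cextgamm$ and $\sum_{i=0}^{N-1}\nu(\calA_i) \le \nu(\cextgamm)$.

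For the lower bound I would use a scaling trick. Set $B' = e^{-\epsilon}\cdot B_{\ext}(\calL) = \{\zeta \in \mf : \inf_{X\in\calL}\ext(\zeta,X) \le e^{-2\epsilon}\}$. Since multiplication by $e^{-\epsilon}$ on $\mf$ commutes with the $\gammacyl$--action and Thurston measure is homogeneous of degree $6g-6$, the pushforward to the quotient satisfies $\nu(\gammacyl \backslash B') = e^{-\epsilon(6g-6)}\nu(\cextgamm)$. I claim $B' \subseteq \bigcup_{i \in \ZZ}\calA_i$ modulo a null set: for $\zeta \in B'$ not projecting to $[\gamma^\pm]$, the function $X \mapsto \ext(\zeta,X)$ is proper on $\calL$ (blowing up at both endpoints, since $\gamma^\pm$ are the only projective classes fixed by $\gamma$), so $i \mapsto \beta(\zeta,X_i)$ attains a minimum at some $i_0 \in \ZZ$; choosing $X \in \calL$ with $\ext(\zeta,X) \le e^{-2\epsilon}$ and a net point $X_j$ with $d(X,X_j) < \epsilon$, \kerch inequality yields $\ext(\zeta,X_{i_0}) \le \ext(\zeta,X_j) \le e^{2\epsilon}\ext(\zeta,X) \le 1$, so $\zeta \in \calA_{i_0}$. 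Projecting and using the essential injectivity from the first paragraph, $\sum_{i=0}^{N-1}\nu(\calA_i) \ge \nu(\gammacyl\backslash B') = e^{-\epsilon(6g-6)}\nu(\cextgamm)$.

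The only genuinely delicate point is the essential $\gammacyl$--disjointness of the $\calA_i$'s, which depends crucially on Proposition \ref{measure zero}; the remainder is a clean packaging of \kerch's inequality, the scaling homogeneity of $\nu$, and the observation that the rays above $[\gamma^\pm]$ in $\mf$ are $\nu$--null so the failure of properness there is harmless.
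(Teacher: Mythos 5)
Your proof is correct and follows essentially the same route as the paper's: the upper bound via Proposition \ref{measure zero} to show the overlaps $\calA_i \cap \calA_j$ are $\nu$--null (the paper phrases this as the boundaries $\partial\calA_i$ having measure zero), and the lower bound via Kerckhoff's inequality together with the degree--$(6g-6)$ homogeneity of Thurston measure. The only cosmetic difference is that you rescale the set $B_{\ext}(\calL)$ by $e^{-\epsilon}$ where the paper rescales the individual foliation $\zeta$ by $e^{\epsilon}$ to land it in $e^{\epsilon}\calA_{i_1}$; the two computations are identical.
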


\begin{proof}
Let $N = N(\calX)$. It follows from the definition that the interior of $\calA_i(\calX)$'s, denoted by $\mathring{\calA}_i(\calX)$'s, are disjoint and $\calA_i(\calX)$'s are $\gamma$--\emph{equivariant}, meaning that $\gamma \calA_i(\calX) = \calA_{i + N} (\calX)$. These facts imply that
$\Pi_{\ext, \gamma} \from \mathring{\calA}_i(\calX) \to \cextgamm$ is an injection for $i \in \ZZ$, and $\Pi_{\ext, \gamma} (\mathring{\calA}_i(\calX)) \subset \calC_{\ext, \gamma}$ are disjoint for $0 \leq i < N$.
 Thus we have 
$$\sum_{i = 0}^{N-1} \nu(\mathring{\calA}_i(\calX)) 
\leq \nu ( \calC_{\ext, \gamma}).$$

By the definition of $E(\cdot, \cdot)$, given at the beginning of Section \ref{ProofOfNeglig}, $\calA_i(\calX) \cap \calA_j (\calX) \subset E(X_i, X_j)$, hence by proposition \ref{measure zero}
$$\partial \calA_i(\calX) = \bigcup_{j \neq i} \calA_i(\calX) \cap \calA_j(\calX) \subset \bigcup_{j \neq i} E(X_i, X_j) \implies \nu(\partial\calA_i (\calX)) = 0.$$ 
From this, we get
\begin{align*} 
    \sum_{i = 0}^{N-1} \nu(\mathring{\calA}_i(\calX)) 
    = \sum_{i = 0}^{N-1} \nu(\calA_i(\calX)) 
     \leq \nu ( \calC_{\ext, \gamma}).
\end{align*}
This proves the right hand inequality in the statement of the lemma. 

To prove the left hand inequality,
let $\zeta \in \bextl$ be arbitrary and assume $H_\zeta \in \proj_\calL \zeta$, so we have $\ext(\zeta, H_\zeta) \leq 1$. 
Since the mesh of $\calX$ is less than $\epsilon$, if $X_{i_0}$ is the element of the net that is closest to $H_\zeta$, by triangle inequality (\kerch inequality) in $\bigtriangleup (\zeta, H_\zeta, X_{i_0})$ we have
 $$\beta(\zeta, X_{i_0}) \leq \beta(\zeta, H_\zeta) + \beta(H_\zeta,  X_{i_0}) < \epsilon \implies 
 \inf_{i \in \ZZ} \beta(\zeta, X_i) < \epsilon \implies 
\inf_{i \in \ZZ} \ext(\zeta, X_i) < e^{2\epsilon}.$$
If the latter infimum is attained at $i = i_1$ then we have $\zeta/e^\epsilon \in \calA_{i_1}$, hence $\zeta \in e^{\epsilon} \calA_{i_1}$. As a result,
$$ B_{\Ext}(\calL) \subset \bigcup e^\epsilon \calA_i(\calX) \implies
\cextgamm \subset \bigcup  \Pi_{\ext, \gamma} (e^{\epsilon} \calA_i(\calX)),
$$
which implies 
\begin{align*}
    \frac{1}{e^{\epsilon(6g - 6) }} \nu ( \cextgamm) \leq \sum_{i = 0}^{N-1} \nu(\calA_i(\calX)).
\end{align*}

\end{proof}

Let
$\calX$ be an $\epsilon$--net and for $\delta > 0$, define the following subsets of $\MF$:
\begin{align*}
    \calU^\delta_i (\calX) & = \{ \zeta \in \MF : \Ext(\zeta, X_i) \leq 1 \text{ and } \beta (\zeta,  X_i) < \inf_{j \neq i} \beta (\zeta,  X_j) - \delta \}; \\
    \calV^\delta_i (\calX) & = \{ \zeta \in \MF : \Ext(\zeta, X_i) \leq 1 \text{ and } \beta (\zeta,  X_i) < \inf_{j \neq i} \beta (\zeta,  X_j) + \delta \}.
\end{align*}
Note that $\calU^\delta_i(\calX)$'s are open and $\gamma$--eqivariant, and the same is true for $\calV^\delta_i$'s. 
Moreover, $\calU^\delta_i (\calX)\subset \calA_i(\calX) \subset \calV^\delta_i$ and $\calU^\delta_i (\calX)\uparrow \mathring{\calA}_i(\calX)$ as $\delta \downarrow 0$; also, $\calV^\delta_i (\calX)\downarrow \calA_i(\calX)$ as $\delta \downarrow 0$. 
For a compact set $\calK \subset \calM_g$,
define the following subsets of $\Gamma \cdot  P$ associated to $\calU^\delta_i(\calX)$ and $\calV^\delta_i(\calX)$:
\begin{align*}
    \mathsf{U}^\delta_i (\calX, \calK) & = \Gamma \cdot  P \cap S \left(X_i, [\calU^\delta_i(\calX)] \right) \cap \typ (X_i, \calK);\\
    \mathsf{V}^\delta_i (\calX) & = \Gamma \cdot  P \cap S \left(X_i, [\calV^\delta_i (\calX)] \right),
\end{align*}
where for a subset $\calU \subset \mf$, we denote $\{ [\zeta]: \zeta \in \calU \} \subset \pmf$ by $[\calU]$.
As before, both $\mathsf{U}^\delta_i (\calX, \calK)$'s and $\mathsf{V}^\delta_i (\calX)$'s are $\gamma$--equivariant.
For the moment, let $N = N(\calX)$, $\sfU_i(R) = \sfU^{\delta}_i(\calX, \calK) \cap B(X_i, R)$ and 
$\sfV_i(R) = \sfV^{\delta}_i(\calX) \cap B(X_i, R)$.
As mentioned in \ref{ProofOutline}, the idea of the proof is to show that for $\calK$ large enough:

\begin{itemize}
    \item For every $i \in \ZZ$, $\pigamma \from \sfU_i(R) \to \Gamma \cdot [P] \cap B(\bar{\calL}, R)$ is almost an injection and 
    $\pigamma (\sfU_i(R) )$'s are more or less disjoint for $0 \leq i < N$(lemma \ref{SectorsRDisjoint}). Hence $\sum_{i = 0}^{N - 1} |\sfU_i(R)|$ gives a lower bound for $|\Gamma \cdot [P] \cap B(\bar{\calL}, R)|$

    \item The union of $\pigamma(\mathsf{V}_i(R+\epsilon))$'s for $0 \leq i < N$ cover almost all of $C(\bar{\calL}, R)$ (lemma \ref{CylMostlyTyp} + lemma \ref{SectorsCover}), hence $\sum_{i = 0}^{N - 1} |\sfV_i(R + \epsilon)|$ gives an upper bound for $|\Gamma \cdot [P] \cap B(\bar{\calL}, R)|$.
\end{itemize}

As $R \rightarrow \infty$, we can use (\ref{AbemForSector}) to count the points in each $\sfU_i (R)$ and $\sfV_i(R)$,
and if $\epsilon, \delta$ are small enough ($\delta$ moves to $0$ much faster than $\epsilon$), the upper and lower bounds obtained in this way are close to each other, and they approximate the right hand side of \ref{CylCount} from above and below.

\begin{lemma} \label{SectorsRDisjoint}
    Let $\calX =(X_i)_{i \in \ZZ}$ be an $\epsilon$--net.  Then, for every compact $\calK \subset \calM_g$ and $\delta>0$, there exists a constant $C = C (\epsilon, \calK, \delta)$ such that for $i \neq j$,
    $$ \left( \mathsf{U}^\delta_i (\calX,\calK) \setminus B(X_i, C) \right) \cap
    \left( \mathsf{U}^\delta_j (\calX, \calK) \setminus B(X_j, C) \right)
    = \varnothing $$
\end{lemma}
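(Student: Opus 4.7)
The strategy is to apply Proposition \ref{CompTeichExt} twice --- once with the foliation witnessing membership in $\sfU_i^\delta$ and once with the one for $\sfU_j^\delta$ --- and read off a numerical contradiction from the $\delta$-gap built into $\calU_i^\delta$ and $\calU_j^\delta$.

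Suppose for contradiction that $Q$ lies in the claimed intersection for a constant $C$ to be chosen. Writing $Q = \pi(g_{t_i}\phi_i)$ with $\phi_i \in Q(X_i)$ and $\zeta_i = \calV(\phi_i)$, one has $Q \in [X_i, \zeta_i)$ and $\beta(\zeta_i, Q) = \beta(\zeta_i, X_i) - d(X_i, Q)$; since the Busemann inequality defining $\calU_i^\delta$ is scale invariant, $\beta(\zeta_i, X_j) - \beta(\zeta_i, X_i) > \delta$. Pick $\zeta_j$ analogously. The axis $\calL$ is $\calK_0$-thick for a fixed compact $\calK_0 \subset \calM_g$, because it projects to a closed geodesic. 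Applying Proposition \ref{CompTeichExt} to $\calG = \calL$ with base points $X_i, X_j$, foliation $\zeta = \zeta_i$, $Z = Q$, and error parameter $\epsilon_0 < \delta/4$ yields
$$d(Q, X_j) - d(Q, X_i) \simeq_{\epsilon_0} \beta(\zeta_i, X_j) - \beta(\zeta_i, X_i) > \delta,$$
so $d(Q, X_j) > d(Q, X_i) + 3\delta/4$. The symmetric application with $\zeta_j$ reverses this strict inequality, a contradiction.

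Choosing $C$ reduces to validating the hypotheses of Proposition \ref{CompTeichExt}. For the length requirement, Kerckhoff's inequality $|\beta(\zeta_i, X_k) - \beta(\zeta_i, Q)| \leq d(Q, X_k)$ combined with $\beta(\zeta_i, Q) = \beta(\zeta_i, X_i) - d(X_i, Q)$ and $\beta(\zeta_i, X_k) > \beta(\zeta_i, X_i) + \delta$ forces $d(Q, X_k) > d(X_i, Q) + \delta > C$ for every $k \neq i$; since $\calX$ is an $\epsilon$-net in $\calL$, every $Y \in \calL$ is within $\epsilon/2$ of some $X_k$, so $d(Q, \calL) > C - \epsilon/2$, and hence $d(Q, H) > C - \epsilon/2$ for any $H \in \proj_\calL Q$. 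Thus the proposition's threshold $d(Q, H) > C_1(\calK', \epsilon_0)$ is met once $C > \epsilon/2 + C_1$.

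The most delicate part is verifying $Q \in \widetilde{\calK'}$ and the $\calK'$-typicality of $[Q, H]$, where $\calK' \supset \calK \cup \calK_0$ is an enlargement depending only on $\calK$. I would take $\calK'$ to contain a bounded tubular neighborhood of $\calL$ in $\calM_g$. By Proposition \ref{GeodCloseThick}, $[X_i, Q]$ passes within a bounded distance of $H$, say at $H' \in [X_i, Q]$; Theorem \ref{FellowTravel} then forces $[X_i, H']$ to fellow-travel the $\calL$-segment $[X_i, H]$, putting $[X_i, H'] \subset \widetilde{\calK'}$. The $\calK$-typicality of $[X_i, Q]$ encoded in the definition of $\sfU_i^\delta$ should then transfer, after a mild enlargement of $\calK'$, to $\calK'$-typicality of the complementary tail $[H', Q]$ and thence to $[Q, H]$ by fellow-traveling; using both $[X_i, Q]$ and $[X_j, Q]$ simultaneously balances the proportional loss coming from trimming the initial $\calL$-parallel segment on either side. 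The resulting $C$ depends only on $\epsilon, \calK, \delta$, as required.
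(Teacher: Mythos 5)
Your core mechanism is the same as the paper's: apply Proposition~\ref{CompTeichExt} twice (once for each index) and read off a contradiction from the $\delta$-gap built into $\calU_i^\delta$ and $\calU_j^\delta$. Your use of Kerckhoff's inequality to show $d(Q, X_k) > d(Q, X_i) + \delta$ for all $k\neq i$, and hence that $d(Q,\calL)$ is large, is a clean and correct shortcut: the paper instead routes through Proposition~\ref{extVshaped} and Lemma~\ref{ProjNearBy} to first bound $d(X_i,H)$ and then deduce $d(Q,H)\geq d(Q,X_i)-C_2$.

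However, the step you flag as delicate is where your argument actually has a gap. You assert that $\calK$--typicality of $[X_i,Q]$ transfers, after trimming the initial segment $[X_i,H']$, to typicality of $[H',Q]$ (and hence $[Q,H]$), and that ``using both $[X_i,Q]$ and $[X_j,Q]$ simultaneously balances the proportional loss.'' This does not work as stated: $\calK$--typicality means the geodesic spends at least \emph{half} its length in $\widetilde\calK$, and trimming a bounded-but-nonzero prefix from a $\calK$--typical segment of length $\ell$ only gives the tail a thick fraction of roughly $(\ell/2-\ell_1)/(\ell-\ell_1)$ in the worst case, which is strictly below $1/2$ no matter how long $\ell$ is, and the second geodesic $[X_j,Q]$ does nothing to repair this. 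What is needed --- and what the paper does --- is to first prove that $d(X_i,H)$ is bounded by a constant $C_2(\calK)$ and then invoke Remark~\ref{enlargecompact}: since $[Q,X_i]$ is $\calK$--typical and $H$ is within bounded distance of $X_i$, the geodesic $[Q,H]$ is $\calK'$--typical for an enlargement $\calK'\supset\calK$ depending only on $\calK$ and $C_2$. Note that your Kerckhoff computation already yields the needed boundedness of $d(X_i,H)$: from $d(Q,X_k)>d(Q,X_i)+\delta$ for all $k\neq i$ and Corollary~\ref{DistThickGeo}, the net point closest to $H$ must be $X_i$ up to a bounded error. So the fix is available within your approach; you simply need to replace the ``proportional loss'' heuristic with this boundedness argument plus Remark~\ref{enlargecompact}, as the paper does.
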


\begin{proof}
    Fix the net $\calX$ throughout the proof. Let $Y \in \mathsf{U}^\delta_i (\calX, \calK)$ and $H \in \proj_\calL Y$. We claim that there is a compact set $\calK'$, only depending on $\calK$, such that $[Y, H]$ is $\calK'$--typical.
    Let $\zeta \in \calU_i^\delta$ be such that $Y$ lies on $[X_i, \zeta)$. 
    Note that, since the net's mesh is less than $\epsilon$,
    $$\beta(\zeta, X_i) = \inf_{ k \in \ZZ} \beta (\zeta, X_k) \simeq_\epsilon \inf_{Y \in \calL} \beta(\zeta, Y).$$
    Because of the shape of $\beta(\zeta, X)$ as a function of $X \in \calL$, described in Proposition \ref{extVshaped}, the latter infimum should be attained near $X_i$, namely, there exists 
    $C_1  = C_1 (\calK)$ such that if $H_\zeta \in \proj_\calL \zeta$, then $d(X_i, H_\zeta) < C_1$.
    Thus, by Theorem \ref{FellowTravel}, $(\zeta, X_i]$ and $(\zeta, H_\zeta]$ $D$--fellow travel for some $D = D(\calK, C_1)$. 
    As a result, if $Z_\zeta \in (\zeta, H_\zeta]$ is such that $d(Y, X_i) = d(Z_\zeta , H_\zeta)$ then $d(Y, Z_\zeta) < D$. 
    Since $H_\zeta = \proj_\calL Z_\zeta$,  Lemma \ref{ProjNearBy} implies $d(H_\zeta, H) < D_1$ for some $D_1 = D_1(\calK, D)$. Triangle inequality then implies
    $d(X_i, H) < C_2 = C_1 + D_1$ 
    %
    and since $[Y,X_i]$ is $\calK$--typical, Remark \ref{enlargecompact} implies that $[Y, H]$ is $\calK'$--typical for an enlargement $\calK' \supset \calK$ that only depends on $\calK, C_2$. 

    This proves the claim.

    If we apply Proposition \ref{CompTeichExt} to $\calK'$ and $\delta/2$, we obtain $C_3 = C_3(\calK', \delta)$ such that if $Y$ and $H$ are as above, 
    $d(Y, \calL) = d(Y, H) > C_3$ and $[Y, H]$ is $\calK'$--typical, then
    $$ d(Y, X_i) - d(Y, X_j) \simeq_{\delta/2} 
    \beta(\zeta, X_i) - \beta(\zeta, X_j).$$
    Note that by triangle inequality in $\bigtriangleup(Y, H, X_i)$,
    $$ d(Y, H) \geq d(Y, X_i) - d(X_i, H) > d(Y, X_i) - C_2, $$
    so for $C = C_2 + C_3$,
    $$d(Y, X_i) > C \implies d(Y, H)> C_3.$$
    As a result, if $Y \in \mathsf{U}^\delta_i (\calX, \calK) \setminus B(X_i, C)$ we have 
    \begin{align*}
        & d(Y, X_i) - d(Y, X_j) \simeq_{\delta/2} d(\zeta, X_i) - d(\zeta, X_j) \\
        &\implies
        d(Y, X_i) - d(Y, X_j) < - \delta/2 < 0 \\
        &\implies d(Y, X_i) < d(Y, X_j). 
    \end{align*}

    The second impication is because $\zeta \in \calU^\delta_i(\calX)$ implies $d(\zeta, X_i) - d(\zeta, X_j) < -\delta$.
    If $Y \in \mathsf{U}^\delta_j(\calK) \setminus B(X_j, C)$ as well, then by changing the role of $X_i$ and $X_j$ in the argument above we obtain  $d(Y, X_j) < d(Y, X_i)$. This contradiction proves that the intersection mentioned in the lemma is empty.
    
\end{proof}

\begin{proof}[Proof of the lower bound for Theorem \ref{CylCount}]
  
For an $\epsilon$--net $\calX$, a compact set $\calK \subset \calM_g$ and $\delta >0$, let $C = C (\epsilon, \calK, \delta)$ be the constant given by Lemma \ref{SectorsRDisjoint}. For every $i \in \ZZ$, define 
$$ \sfU^\delta_i(\calX, \calK; C, R) = \mathsf{U}^\delta_i (\calX, \calK) \cap B(X_i, R) \setminus B(X_i, C).$$
We claim that
$$\Pi_\gamma \from \sfU^\delta_i(\calX, \calK; C, R)  \to 
\Gamma \cdot [P] \cap B(\bar{\calL}, R)$$ 
Is an injection. To prove this claim, assume $\Pi_\gamma (X) = \Pi_\gamma(Y)$ for $X, Y \in \sfU^\delta_i(\calX, \calK; C, R)$. 
This imlies that there exists $k\in \ZZ$ such that $\gamma^k X = Y$, so $Y$ belongs to the intersection of $\sfU^\delta_i(\calX, \calK; C, R) $ with
$ \gamma^k \sfU^\delta_{i}(\calX, \calK; C, R)  = \sfU^\delta_{i+ kN}(\calX,\calK; C, R)$. By the definition of $C$, we have $k = 0$, hence $X = Y$. This proves the claim.

A similar argument implies that $\Pi_\gamma \left( \sfU^\delta_i(\calX, \calK; C, R) \right)$ are disjoint for $0 \leq i < N(\calX)$, so
\begin{align*}
    \sum_{i = 0}^{N(\calX)-1} |\sfU^\delta_i(\calX, \calK; C, R) | \leq 
    |\Gamma \cdot [P] \cap B(\bar{\calL}, R)|.
\end{align*}

Multiplying both sides by $e^{-hR}$ and taking $\liminf$ as $R \rightarrow \infty$, we get 
\begin{align} \label{LowerBoundIneq}
   \sum_{i = 0}^{N(\calX)-1} s^\delta_i(\calX, \calK)
   & \leq \liminf_{R \rightarrow \infty} e^{-hR} |\Gamma \cdot [P] \cap B(\bar{\calL}, R)|,
\end{align}
where $s^\delta_i(\calX, \calK)$ is defined by
\begin{align*}
   s^\delta_i(\calX, \calK) & = \liminf_{R \rightarrow \infty} e^{-hR} |\sfU^\delta_i(\calK) \cap B(X_i, R)|
\end{align*}

Note that (\ref{LowerBoundIneq}) is valid for every $\epsilon$--net $\calX$, compact set $\calK \subset \calM_g$ and $\delta > 0$. 
Now, fixing $\calX$ and $\delta$, we let the compact sets $(\calK_n)_{n \in \NN} \subset \calM_g$ form an exhaustion of $\calM_g$. This means that $\calK_n \subset \mathring{\calK}_{n + 1}$ and $\calM_g = \bigcup_{n \in \NN} \calK_n$. Then, by (\ref{AbemForSector}) and (\ref{AbemSectorTypical}), 
$$s^\delta_i(\calX, \calK_n) \uparrow \cte \nu( \calU^\delta_i(\calX)) \gap \text{ as } \gap n \to \infty,
$$
so we get 
$$\cte \sum_{i = 0}^{N(\calX) - 1} \nu( \calU^\delta_i(\calX)) \leq \liminf_{R \rightarrow \infty} e^{-hR} |\Gamma \cdot [P] \cap B(\bar{\calL}, R)|.$$

Now, keeping the net $\calX$ fixed in the above expression and letting $\delta \downarrow 0$, we have
$\calU^\delta_i (\calX)\uparrow \mathring{\calA}_i (\calX)$, hence 
$\nu(\calU^\delta_i(\calX)) \uparrow \nu(\mathring{\calA}_i (\calX)) = \nu(\calA_i(\calX))$. Thus,
\begin{align*}
   \cte \sum_{i = 0}^{N(\calX)-1} \nu(\calA_i(\calX))
    \leq \liminf_{R \rightarrow \infty} e^{-hR} 
    |\Gamma \cdot [P] \cap B(\bar{\calL}, R)|.
\end{align*}

Finally, using Lemma \ref{extconesqueeze} and making the $\epsilon$--net $\calX$ finer, i.e., letting $\epsilon \rightarrow 0$, proves the lower bound
\begin{align*}
    \cte \nu(\cextgamm) \leq
    \liminf_{R \rightarrow \infty} e^{-hR} 
    |\Gamma \cdot [P] \cap B(\bar{\calL}, R)|.
\end{align*}

\end{proof}

For the upper bound we first prove:

\begin{lemma} \label{CylMostlyTyp}
    Let $\gamma$ and $\calL$  be as before. Then, for every $\kappa >0$, there exists a compact set $\calK \subset \calM_g$, only depending on $\kappa$, such that 
    \begin{align*}
        \limsup_{R \rightarrow \infty}
        e^{-hR} |\gammacyl \backslash \big( \Gamma . P \cap B(\calL, R) \setminus \typ(\calL, \calK) \big)| < \kappa.
    \end{align*}
\end{lemma}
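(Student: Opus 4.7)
The plan is to dominate the left-hand side by a sector count at the fixed base point $O \in \calL$ and then invoke (\ref{AbemSectorTypical}). By choosing a representative in each $\gammacyl$-orbit so that its projection to $\calL$ lies in $[O, \gamma O]$, any $Y$ in the selected fundamental domain satisfies $d(Y, O) \leq d(Y, H_Y) + d(H_Y, O) \leq R + L$ for some $H_Y \in \proj_\calL Y$, and hence
\begin{align*}
\bigl|\gammacyl \backslash \bigl(\Gamma \cdot P \cap B(\calL, R) \setminus \typ(\calL, \calK)\bigr)\bigr| \leq \bigl|\Gamma \cdot P \cap B(O, R + L) \setminus \typ(\calL, \calK)\bigr|.
\end{align*}

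\textbf{Comparison of $[Y, O]$ and $[Y, H_Y]$.} The key step is: for every compact $\calK_0 \supset \bar{\calL}$ there exists a larger compact $\calK = \calK(\calK_0) \subset \calM_g$ such that $\typ(O, \calK_0) \cap B(O, R+L) \subset \typ(\calL, \calK)$. By Proposition \ref{GeodCloseThick} applied to the $\bar{\calL}$-thick geodesic $\calL$, the geodesic $[Y, O]$ passes within a constant $C_0 = C_0(\bar{\calL})$ of $H_Y$; fixing such a point $Z \in [Y, O]$, the initial segment $[Y, Z]$ is a prefix of $[Y, O]$ of length at least $d(Y, H_Y) - C_0$, while the tail $[Z, O]$ has length at most $C_0 + L$. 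The segments $[Y, Z]$ and $[Y, H_Y]$ share the endpoint $Y$ and have other endpoints within $C_0$ of each other in the thick region $\widetilde{\bar{\calL}}$; a one-sided version of Theorem \ref{FellowTravel} (using that only the $\calL$-endpoint is being perturbed, the perturbation taking place in the thick part) then yields $D$-fellow-traveling with $D = D(\calK_0)$. Enlarging $\calK_0$ to a compact $\calK$ containing both the $D$-neighborhood of $\calK_0$ and a $(C_0+L)$-neighborhood of $\bar{\calL}$, the time that $[Y, H_Y]$ spends in $\widetilde{\calK}$ is at least the time that $[Y, Z]$ spends in $\widetilde{\calK_0}$, while the bounded tail pieces lie in $\widetilde{\calK}$ by construction. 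A direct bookkeeping on these lengths then shows that $[Y, O]$ being $\calK_0$-typical forces $[Y, H_Y]$ to be $\calK$-typical.

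\textbf{Applying the sector bound.} Given $\kappa > 0$, apply (\ref{AbemSectorTypical}) at $O$ with $\calU = \pmf$ and parameter $\kappa' = \tfrac{1}{2} \kappa e^{-hL}$ to obtain a compact $\calK_0 \supset \bar{\calL}$ with
\begin{align*}
\limsup_{R \to \infty} e^{-hR} \bigl|\Gamma \cdot P \cap S(O, \pmf, R) \setminus \typ(O, \calK_0)\bigr| < \kappa'.
\end{align*}
Let $\calK$ be the enlargement supplied by the comparison step. Combining the two reductions yields
\begin{align*}
\limsup_{R \to \infty} e^{-hR} \bigl|\gammacyl \backslash \bigl(\Gamma \cdot P \cap B(\calL, R) \setminus \typ(\calL, \calK)\bigr)\bigr| \leq e^{hL} \kappa' < \kappa,
\end{align*}
as required, with $\calK$ depending only on $\kappa$ (the data $O, P, \gamma, L$ being fixed). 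The principal difficulty is the comparison step, since Theorem \ref{FellowTravel} is stated with both endpoints in the thick part while our $Y$ is arbitrary; one must exploit that only the $\calL$-endpoint is being perturbed and that Proposition \ref{GeodCloseThick} forces the two geodesics to agree near $H_Y$, so that only a bounded terminal discrepancy needs to be absorbed by the one-time enlargement $\calK_0 \to \calK$.
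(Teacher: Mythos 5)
Your proof takes essentially the same route as the paper's. Both arguments use the injective map $h$ into $\Gamma \cdot P \cap B(O, R+L)$ (a fundamental-domain selection for $\gammacyl$), the fact that $\typ(O,\calK_0)$ is carried into $\typ(\calL,\calK)$ under $h$ after a uniform enlargement, and then (\ref{AbemSectorTypical}) with $\calU = \pmf$ at the base point $O$. The paper cites Remark~\ref{enlargecompact} directly for the comparison step, whereas you expand this into the Proposition~\ref{GeodCloseThick}-plus-fellow-travel argument; this is a reasonable way to unpack the remark, and you are right to flag that Theorem~\ref{FellowTravel} as literally stated needs thick endpoints while the shared endpoint $\gamma^k Y$ may be thin — you handle it exactly as one should (only the $\calL$-side endpoint is perturbed, and Proposition~\ref{GeodCloseThick} confines the discrepancy to a bounded terminal segment). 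The paper elides this subtlety. You also correctly insert the $e^{hL}$ correction factor when trading $B(O,R+L)$ for the $e^{-hR}$ normalization, which the paper appears to gloss over; this is a minor point but your bookkeeping is the more precise version.
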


\begin{proof}
    Let 
    $$ h \from \gammacyl \backslash \big( \Gamma \cdot P \cap B(\calL, R) \big) \to \Gamma \cdot  P \cap B(O, R+L)$$
    be the map defined in the proof of Theorem \ref{VolumeResult}, given after the statement of Theorem \ref{CylCount}. 
    By (\ref{AbemSectorTypical}), there exists $\calK' \subset \calM_g$, only depending on $O$ and $\kappa$, such that 
    \begin{align} \label{MostlyTypForBall}
        e^{-hR} |\Gamma \cdot P \cap B(O, R + L) \setminus \typ(O, \calK')| < \kappa
    \end{align}
    for $R$ large enough. 
    Let $[Y]  \in \gammacyl \backslash (\Gamma \cdot P \cap B(\calL, R)$ and recall that $h$ sends $[Y]$ to $ \gamma^kY$ such that $d(O, \gamma^k H) < L$ for some $H \in \proj_\calL Y$.
    Since $d(O, \gamma^k H ) < L$, by Remark \ref{enlargecompact}, there exists an enlargement $\calK \supset \calK'$, only depending on $\calK'$ and $L$, such that $[\gamma^k Y, O]$ $\calK'$--typical implies $[\gamma^k Y, \gamma^k H]$ is $\calK$--typical. 
    Thus, $h$ sends 
    $$\gammacyl \backslash \big( \Gamma \cdot P \cap B(\calL, R) \setminus \typ(\calL, \calK) \big) 
    \gap \text{ to } \gap \Gamma \cdot P \cap B(O, R+L) \setminus \typ(O, \calK').$$
    The injectivity of $h$ and (\ref{MostlyTypForBall}) then concludes the proof.
    
\end{proof}

\begin{lemma} \label{SectorsCover}
    Let the axis $\calL$ and the point $P \in \calT_g$ be as before, and assume $\calX =(X_i)_{i \in \ZZ}$ is an $\epsilon$--net.
    Then, for every compact set $\calK \subset \calM_g$ and $\delta > 0$, there exists $C = C(\epsilon, \calK, \delta)$ such that the following holds: if 
    $$Y \in \Gamma \cdot P \cap B(\calL, R) \cap \typ(\calL, \calK), \gap d(Y, \calL) > C,$$
    and $i_0 \in \ZZ$ is such that $d(Y, X_{i_0}) = \inf_{i \in \ZZ} d(Y, X_i),$
    then
    $$Y \in \sfV^\delta_{i_0}(\calX) \cap B(X_{i_0}, R + \epsilon).$$
\end{lemma}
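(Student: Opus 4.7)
The plan is to handle the two conclusions of the lemma separately. Throughout, fix $H \in \proj_\calL Y$ such that $[Y, H]$ is $\calK$--typical; such $H$ exists by the hypothesis $Y \in \typ(\calL, \calK)$.

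The ball containment $Y \in B(X_{i_0}, R + \epsilon)$ is straightforward. Pick a net point $X_k$ on $\calL$ closest to $H$. Since $\calX$ has mesh less than $\epsilon$, we have $d(H, X_k) < \epsilon$, so by the triangle inequality $d(Y, X_k) \leq d(Y, H) + d(H, X_k) < R + \epsilon$, and the minimality of $d(Y, X_{i_0})$ gives $d(Y, X_{i_0}) \leq d(Y, X_k) < R + \epsilon$. This step imposes no constraint on $C$.

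For the sector containment, extend the segment $[X_{i_0}, Y]$ to a geodesic ray $[X_{i_0}, \zeta)$ for some $\zeta \in \mf$ and rescale $\zeta$ so that $\Ext(\zeta, X_{i_0}) = 1$; under this normalization, $Y \in S\bigl(X_{i_0}, [\calV^\delta_{i_0}(\calX)]\bigr)$ is equivalent to $\beta(\zeta, X_{i_0}) < \inf_{j \neq i_0} \beta(\zeta, X_j) + \delta$. Apply Proposition \ref{CompTeichExt} with the given $\calK$ and with $\delta/2$ in the role of the proposition's $\epsilon$, and let $C_1 = C_1(\calK, \delta)$ be the resulting constant; take $C := C_1$. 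Then, for each $j \in \ZZ$, the proposition (applied with $X = X_{i_0}$, with the proposition's ``$Y$'' equal to $X_j$, with $Z = Y$, and with the same $H$) yields
\begin{equation*}
d(Y, X_j) - d(Y, X_{i_0}) \simeq_{\delta/2} \beta(\zeta, X_j) - \beta(\zeta, X_{i_0}),
\end{equation*}
since $[Y, H]$ is $\calK$--typical and $d(Y, H) = d(Y, \calL) > C$. By the minimality of $d(Y, X_{i_0})$, the left-hand side is nonnegative, whence $\beta(\zeta, X_{i_0}) \leq \beta(\zeta, X_j) + \delta/2$ for each $j \neq i_0$; taking the infimum over $j \neq i_0$ gives the strict inequality $\beta(\zeta, X_{i_0}) < \inf_{j \neq i_0} \beta(\zeta, X_j) + \delta$, as required.

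The only delicate point is the invocation of Proposition \ref{CompTeichExt}: while its stated hypothesis includes $Z \in [X,\zeta) \cap \widetilde{\calK}$, in our situation we only know that $[Y, H]$ is $\calK$--typical rather than that $Y$ itself is thick. This is handled exactly as in the proof of Lemma \ref{SectorsRDisjoint}, where the same substitution $Z = Y$ is made under the same typicality hypothesis; any implicit enlargement of $\calK$ needed is absorbed into the constant $C$, which still depends only on $\calK$ and $\delta$ (and, vacuously, on $\epsilon$).
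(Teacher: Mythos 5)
Your proof follows the paper's argument essentially step for step: the ball containment via the closest net point to a foot $H \in \proj_\calL Y$, and the sector containment via Proposition~\ref{CompTeichExt} applied to the ray through $Y$ from $X_{i_0}$ together with the minimality of $d(Y, X_{i_0})$. The final remark you flag as ``delicate'' — that Proposition~\ref{CompTeichExt} formally requires $Z \in \widetilde{\calK}$, which the hypothesis $Y \in \typ(\calL, \calK)$ does not literally supply — is a lacuna the paper itself does not address in its proof of this lemma either (it simply invokes the proposition with $Z = Y$); your observation that it is absorbed into an enlargement of $\calK$ and hence into the constant $C$ is the right way to patch it, and does not change the dependence $C = C(\epsilon, \calK, \delta)$.
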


\begin{proof}
Let
$$Y \in \Gamma \cdot P \cap B(\calL, R) \cap \typ(\calL, \calK).$$
Note that $\inf_{i \in \ZZ} d(Y, X_i)$ is attained for some $i \in \ZZ$ because of the shape of $d(Y, X)$ as a function of $X \in \calL$, described in Corollary \ref{DistThickGeo}.
Let the geodesic from $X_{i_0}$ to $Y$ hit the boundary at $[\zeta]$, i.e., $Y \in [X_{i_0}, \zeta)$. By Proposition \ref{CompTeichExt}, there exists $C = C(\calK, \delta)$ such that if $d(Y, \calL) > C$, then for all $i \in \ZZ$ we have
\begin{align*}
    & \beta(\zeta, X_i) - \beta(\zeta, X_{i_0}) \simeq_\delta d(Y, X_i) - d(Y, X_{i_0}) \\
    \implies & 
    \beta(\zeta, X_i) - \beta(\zeta, X_{i_0}) >
    d(Y, X_i) - d(Y, X_{i_0}) - \delta \geq - \delta \\
    \implies &
    \beta(\zeta, X_{i_0}) <  \beta(\zeta, X_i) + \delta\\
    \implies &
    \zeta \in \calV^\delta_{i_0} (\calX)
\end{align*}

Let $H \in \proj_\calL Y$ and note that since $Y \in B(\calL, R)$, we have $d(Y, H) \leq R$. Choose $X_{i_1}$ to be the point of $\calX$ that is closest to $H$. Since the mesh of $\calX$ is less than $\epsilon$, we have $d(H, X_{i_1}) < \epsilon$, hence by triangle inequality $d(Y, X_{i_1}) < R + \epsilon$. So 
$$ d(Y, X_{i_0}) = \inf_{j \in \ZZ} d(Y, X_i) < R+ \epsilon.$$
This proves the lemma.
 
\end{proof}

\begin{proof}[Proof of the upper bound for Theorem \ref{CylCount}]

For an $\epsilon$--net $\calX$, a compact set $\calK \subset \calM_g$ and $\delta >0$, let $C = C (\epsilon, \calK, \delta)$ be the constant given by Lemma \ref{SectorsCover}, and for $R > C$, write 
$$\Gamma \cdot P \cap B(\calL, R) = \sfB_1(\calK; C, R) \cup \sfB_2(\calK, R) \cup \sfB_3 (C),$$
where
\begin{align*}
    \sfB_1(\calK; C, R) & =  \Gamma \cdot P \cap B(\calL, R) \cap \typ(\calL, \calK) \setminus B(\calL, C) ;\\ 
    \sfB_2(\calK, R) & = \Gamma \cdot P \cap B(\calL, R) \setminus \typ(\calL, \calK);\\
    \sfB_3( C) & =  \Gamma \cdot P \cap B(\calL, C). 
\end{align*}
Note that $\sfB_1(\calK, C, R)$ ( $\sfB_2(\calK, R),\,\, \sfB_3(C)$ ) is $\gamma$--invariant, hence we can form the quotient $\overline{\sfB}_1(\calK, C, R) = \gammacyl \backslash \sfB_1(\calK, C, R)$ 
( $\overline{\sfB}_2 (\calK, R) = \gammacyl \backslash \sfB_2(\calK, R), \,\,\overline{\sfB}_3(C) =  \gammacyl \backslash \sfB_3(C)$ ) and obtain
\begin{align} \label{3sums}
    |\gammacyl \backslash \big(\Gamma \cdot P \cap B(\calL, R) \big)| \leq \overline{\sfB}_1(\calK, C, R) + \overline{\sfB}_2(\calK, R) + \overline{\sfB}_3(C).
\end{align}
Since $|\overline{\sfB}_3(C)|$ is a constant only depending on $C$, 
$$ \limsup_{R \rightarrow \infty} e^{-hR} |\overline{\sfB}_3(C)| \rightarrow 0.$$
To control $|\overline{\sfB}_2(\calK, R)|$, we define
%
$$ \kappa(\calK) = \limsup_{R \rightarrow \infty} e^{-hR} |\overline{\sfB}_2(\calK, R)|.$$
%
%
To find an upper bound for $\overline{\sfB}_1(\calK, C, R)$,
note that by Lemma \ref{SectorsCover},
\begin{align*}
    \sfB_1(\calK, C, R) & \subset \bigcup_{i \in \ZZ} \big( \sfV^\delta_i(\calX) \cap B( X_i, R + \epsilon) \big) \\
    \implies
    |\overline{\sfB}_1(\calK, C, R) | 
    & \leq \sum_{i = 0}^{N(\calX)-1} |\sfV^\delta_i(\calX) \cap B( X_i, R + \epsilon)|.
\end{align*}
  
Multiplying both sides of (\ref{3sums}) by $e^{-hR}$ and taking the $\limsup$ as $R \rightarrow \infty$ (while keeping $\calX$, $\calK$ and $\delta$ fixed), we obtain
\begin{align*}
    \limsup_{R \rightarrow \infty} e^{-hR} |\gammacyl \backslash \Gamma \cdot P \cap B(\calL, R)| & \leq
    \sum_{i = 0}^{N(\calX)-1} 
    \lim_{R \rightarrow \infty} e^{-hR} 
    |\sfV^\delta_i \cap B( X_i, R + \epsilon)| 
    + \kappa(\calK)\\
    & = \sum_{i = 0}^{N(\calX)-1} \cte e^{h \epsilon} \nu(\calV^\delta_i (\calX)) + \kappa(\calK),
\end{align*}
where the equality is by (\ref{AbemForSector}).
Note that the above is valid for every $\epsilon$--net $\calX$, compact set $\calK \subset \calM_g$ and $\delta > 0$. Keeping $\calX$ and $\delta$ fixed, we let $\calK_n \uparrow \calM_g$ to be an exhaustion of $\calM_g$. 
By Lemma \ref{CylMostlyTyp}, $\kappa(\calK_n) \downarrow 0$ as $n \to \infty$, thus
\begin{align*}
    \limsup_{R \rightarrow \infty} {e^{-hR} |\gammacyl \backslash \Gamma \cdot P \cap B(\calL, R)|}
    \leq \cte e^{h \epsilon} \sum_{i = 0}^{N(\calX)-1} \nu(\calV^\delta_i (\calX)).
\end{align*}
As in the proof of the lower bound, we first let $\delta \downarrow 0$ and use $\bigcap \calV^\delta_i (\calX)= \calA_i(\calX)$, then let $\epsilon \downarrow 0$ and use Lemma \ref{extconesqueeze} to obtain
\begin{align*}
    \limsup_{R \rightarrow \infty} {e^{-hR} |\gammacyl \backslash \Gamma \cdot P \cap B(\calL, R)|}
    \leq \cte \nu(\cextgamm).
\end{align*}

\end{proof}


\bibliographystyle{alpha}
\bibliography{references}

\begin{thebibliography}{ABEM12}

\bibitem[ABEM12]{abem}
Jayadev Athreya, Alexander Bufetov, Alex Eskin, and Maryam Mirzakhani.
\newblock Lattice point asymptotics and volume growth on {T}eichm\"{u}ller
  space.
\newblock {\em Duke Math. J.}, 161(6):1055--1111, 2012.

\bibitem[DH75]{D-H}
A.~Douady and J.~Hubbard.
\newblock On the density of {S}trebel differentials.
\newblock {\em Invent. Math.}, 30(2):175--179, 1975.

\bibitem[Dum15]{dumas}
David Dumas.
\newblock Skinning maps are finite-to-one.
\newblock {\em Acta Math.}, 215(1):55--126, 2015.

\bibitem[EM93]{EskinMullen}
Alex Eskin and Curt McMullen.
\newblock Mixing, counting, and equidistribution in {L}ie groups.
\newblock {\em Duke Math. J.}, 71(1):181--209, 1993.

\bibitem[EMR21]{EMR}
Alex Eskin, Maryam Mirzakhani, and Kasra Rafi.
\newblock Absolutely continuous stationary measures for the mapping class
  group.
\newblock {\em preprint}, 2021.

\bibitem[Gar84]{Gardiner}
Frederick~P. Gardiner.
\newblock Measured foliations and the minimal norm property for quadratic
  differentials.
\newblock {\em Acta Math.}, 152(1-2):57--76, 1984.

\bibitem[HM79]{hm}
John Hubbard and Howard Masur.
\newblock Quadratic differentials and foliations.
\newblock {\em Acta Math.}, 142(3-4):221--274, 1979.

\bibitem[Mar04]{MargulisThesis}
Grigoriy~A. Margulis.
\newblock {\em On some aspects of the theory of {A}nosov systems}.
\newblock Springer Monographs in Mathematics. Springer-Verlag, Berlin, 2004.
\newblock With a survey by Richard Sharp: Periodic orbits of hyperbolic flows,
  Translated from the Russian by Valentina Vladimirovna Szulikowska.

\bibitem[Min96]{MinskyProjToThick}
Yair~N. Minsky.
\newblock Quasi-projections in {T}eichm\"{u}ller space.
\newblock {\em J. Reine Angew. Math.}, 473:121--136, 1996.

\bibitem[MM99]{CurveComplex}
Howard~A. Masur and Yair~N. Minsky.
\newblock Geometry of the complex of curves. {I}. {H}yperbolicity.
\newblock {\em Invent. Math.}, 138(1):103--149, 1999.

\bibitem[MP89]{Papadopoulos}
John McCarthy and Athanase Papadopoulos.
\newblock Dynamics on {T}hurston's sphere of projective measured foliations.
\newblock {\em Comment. Math. Helv.}, 64(1):133--166, 1989.

\bibitem[PH92]{PH}
R.~C. Penner and J.~L. Harer.
\newblock {\em Combinatorics of train tracks}, volume 125 of {\em Annals of
  Mathematics Studies}.
\newblock Princeton University Press, Princeton, NJ, 1992.

\bibitem[PP17]{CommonPerp}
Jouni Parkkonen and Fr\'{e}d\'{e}ric Paulin.
\newblock Counting common perpendicular arcs in negative curvature.
\newblock {\em Ergodic Theory Dynam. Systems}, 37(3):900--938, 2017.

\bibitem[Raf14]{hyperbolicinteich}
Kasra Rafi.
\newblock Hyperbolicity in {T}eichm\"{u}ller space.
\newblock {\em Geom. Topol.}, 18(5):3025--3053, 2014.

\bibitem[Roy71]{royden}
H.~L. Royden.
\newblock Automorphisms and isometries of {T}eichm\"{u}ller space.
\newblock In {\em Advances in the {T}heory of {R}iemann {S}urfaces ({P}roc.
  {C}onf., {S}tony {B}rook, {N}.{Y}., 1969)}, pages 369--383. Ann. of Math.
  Studies, No. 66. Princeton Univ. Press, Princeton, N.J., 1971.

\end{thebibliography}

\end{document}